\title{Excluding Induced Subdivisions of the Bull and Related Graphs}
\author{Maria Chudnovsky\thanks{Columbia University, New York, NY 10027, USA. E-mail: mchudnov@columbia.edu. Partially supported by NSF grants DMS-0758364 and DMS-1001091.}, 
Irena Penev\thanks{Columbia University, New York, NY 10027, USA. E-mail: ipenev@math.columbia.edu.}, 
Alex Scott\thanks{Mathematical Institute, University of Oxford, 24-29 St Giles', Oxford OX1 3LB, UK. E-mail: scott@maths.ox.ac.uk.}, 
Nicolas Trotignon\thanks{CNRS, LIP -- ENS Lyon, 15 parvis Ren\'e Descartes, BP 7000, 69342 Lyon cedex 07,
   France.\ E-mail: nicolas.trotignon@ens-lyon.fr. Partially supported by \emph{Agence Nationale de la Recherche} under reference
    \textsc{anr 10 jcjc 0204 01}}} 
\date{June 27, 2011}
\newtheorem{theorem}{}[section] 
\newtheorem*{decomposition bull}{\ref{decomposition bull}} 
\newtheorem*{alloy}{\ref{alloy}} 
\newtheorem*{alloy induction case}{\ref{alloy induction case}} 
\begin{document}
\maketitle 
\noindent 
\begin{abstract} 
\noindent 
For any graph $H$, let ${\rm Forb}^*(H)$ be the class of graphs with
no induced subdivision of $H$. It was conjectured in [A.D. Scott,
Induced trees in graphs of large chromatic number, {\em Journal of 
Graph Theory}, 24:297--311, 1997] that, for every graph $H$, there
is a function $f_H:\mathbb{N} \rightarrow \mathbb{R}$ such that for
every graph $G \in {\rm Forb}^*(H)$, $\chi(G) \leq f_H(\omega(G))$. We
prove this conjecture for several graphs $H$, namely the paw (a
triangle with a pendant edge), the bull (a triangle with two
vertex-disjoint pendant edges), and what we call a ``necklace,'' that
is, a graph obtained from a path by choosing a matching such that no
edge of the matching is incident with an endpoint of the path, and for
each edge of the matching, adding a vertex adjacent to the ends of
this edge.
\end{abstract}

\section{Introduction} 
All graphs in this paper are finite and simple. A {\em clique} (respectively: {\em stable set}) in a graph $G$ is a set of pairwise adjacent (respectively: non-adjacent) vertices in $G$. Given a graph $G$, we denote by $\omega(G)$ the clique number of $G$ (i.e.\ the maximum number of vertices in a clique in $G$), and we denote by $\chi(G)$ the chromatic number of $G$. A class $\mathcal{G}$ of graphs is said to be {\em hereditary} if it is closed under isomorphism and taking induced subgraphs. A hereditary class $\mathcal{G}$ is said to be {\em $\chi$-bounded} if there is a non-decreasing function $f:\mathbb{N} \rightarrow \mathbb{R}$ such that $\chi(G) \leq f(\omega(G))$ for all graphs $G \in \mathcal{G}$; under such circumstances, we say that the class $\mathcal{G}$ is {\em $\chi$-bounded by $f$}, and that $f$ is a {\em $\chi$-bounding function} for $\mathcal{G}$. Given a graph $H$, we say that a graph $G$ is an $H^*$ provided that $G$ is a subdivision of the graph $H$ (in particular, the graph $H$ itself is an $H^*$). Given a graph $H$, we say that a graph $G$ is {\em $H$-free} if it does not contain $H$ as an induced subgraph, and we say that $G$ is {\em $H^*$-free} if it does not contain any subdivision of $H$ as an induced subgraph. We denote by ${\rm Forb}(H)$ the class of all $H$-free graphs, and we denote by ${\rm Forb}^*(H)$ the class of all $H^*$-free graphs. Clearly, ${\rm Forb}(H)$ and ${\rm Forb}^*(H)$ are hereditary classes for every graph $H$. 
\\
\\
Gy\'arf\'as~\cite{gyarfas:conjecture} and Sumner~\cite{sumner:t} independently conjectured that for any tree $T$, the class ${\rm Forb}(T)$ is $\chi$-bounded. The conjecture has been proven for trees of radius $2$ and a few trees of larger radius (see \cite{gyarfas:perfect}, \cite{gst}, \cite{kp}, \cite{kz}, \cite{scott:tree}). Scott~\cite{scott:tree} proved a weakened (``topological'') version of the conjecture: for any tree $T$, the class ${\rm Forb}^*(T)$ is $\chi$-bounded. (Since every forest is an induced subgraph of some tree, this result immediately implies that ${\rm Forb}^*(F)$ is $\chi$-bounded for every forest $F$.) Scott further conjectured that for any graph $H$, the class ${\rm Forb}^*(H)$ is $\chi$-bounded; this generalized a still-open conjecture of Gy\'arf\'as~\cite{gyarfas:perfect}, that the class ${\rm Forb}^*(C_n)$ is $\chi$-bounded for every $n$, where $C_n$ is the chordless cycle of length $n$ (see also \cite{s2}). The aim of this paper is to investigate Scott's conjecture for several particular graphs $H$. 
\\
\\
The {\em paw} is the graph with vertex-set $\{x_1,x_2,x_3,y\}$ and edge-set $\{x_1x_2,x_2x_3,x_3x_1,x_1y\}$. In section~\ref{sec:paw}, we give a structural description of the class ${\rm Forb}^*(paw)$, which we then use to compute the best possible $\chi$-bounding function for the class (see \ref{bound paw}). Together with previously known results, this theorem implies that the class ${\rm Forb}^*(H)$ is $\chi$-bounded for all graphs $H$ on at most four vertices. Indeed, if $H$ is a forest, then the result follows from the result of Scott~\cite{scott:tree} mentioned above. If $H$ is the {\em triangle} (i.e.\ the complete graph on three vertices), then ${\rm Forb}^*(H)$ is the class of all forests. If $H$ is the graph with vertex-set $\{x,y,z,w\}$ and edge-set $\{xy,yz,zx\}$, then any graph $G$ in ${\rm Forb}^*(H)$ can be partitioned into a forest and a graph whose clique number is smaller than $\omega(G)$ (indeed, take any vertex $v$ of $G$, and note that the subgraph of $G$ induced $v$ and its non-neighbors is a forest, while the subgraph of $G$ induced by the neighbors of $v$ has clique number smaller than $\omega(G)$), and consequently, ${\rm Forb}^*(H)$ is $\chi$-bounded by the function $f(n) = 2n$. If $H$ is the {\em diamond} (i.e.\ the graph obtained by deleting an edge from the complete graph on four vertices), then the result follows from a theorem of Trotignon and Vu\v skovi\'c, see \cite{nicolas.kristina:one}. If $H$ is the complete graph on four vertices, Scott's conjecture follows from the work of several authors, see \cite{nicolas:isk4}. Finally, if $H$ is the {\em square} (i.e.\ the chordless cycle on four vertices), then ${\rm Forb}^*(H)$ is the famous class of \emph{chordal} graphs, see~\cite{livre:perfectgraphs}. 
\\
\\
The {\em bull} is the graph with vertex-set $\{x_1,x_2,x_3,y_1,y_2\}$ and edge-set $\{x_1x_2,x_2x_3,x_3x_1,x_1y_1,x_2y_2\}$. In section~\ref{sec:bullDecomp}, we prove a decomposition theorem for bull$^*$-free graphs, see~\ref{decomposition bull}. In section~\ref{sec:bullChiBound}, we use this theorem to prove that the class ${\rm Forb}^*(bull)$ is $\chi$-bounded by the function $f(n) = n^2$, see~\ref{quadratic}. We note that this is the best possible polynomial $\chi$-bounding function for ${\rm Forb}^*(bull)$ in the following sense: there do not exist positive constants $c,r \in \mathbb{R}$, with $r < 2$, such that ${\rm Forb}^*(bull)$ is $\chi$-bounded by the function $f(n) = cn^r$. As ${\rm Forb}^*(bull)$ contains all graphs with no stable set of size three, this follows immediately from a result of Kim \cite{Kim} that the Ramsey number $R(t,3)$ has order of magnitude $\frac{t^2}{\log t}$ (in fact, it is enough that $R(t,3)=t^{2-o(1)}$, which also follows from an earlier result of Erd\H os \cite{erdos:gapii}).
\\
\\
Finally, in section \ref{sec:necklaces}, we consider graphs that we call ``necklaces.'' A necklace is a graph obtained from a path by choosing a matching such that no edge of the matching is incident with an endpoint of the path, and for each edge of the matching, adding a vertex adjacent to the ends of this edge (see section \ref{sec:necklaces} for a more formal definition). We prove that for any given necklace $N$, the class ${\rm Forb}^*(N)$ is $\chi$-bounded by an exponential function (see~\ref{necklace}). We observe that the bull is a special case of a necklace, and so the results of section \ref{sec:necklaces} imply that ${\rm Forb}^*(bull)$ is $\chi$-bounded; however, the $\chi$-bounding function for ${\rm Forb}^*(bull)$ from \ref{quadratic} is polynomial, whereas the one from \ref{necklace} is exponential. Further, we note that for all positive integers $m$, the $m$-edge path, denoted by $P_{m+1}$, is a necklace; furthermore, since any subdivision of an $m$-edge path contains an $m$-edge path as an induced subgraph, we know that ${\rm Forb}(P_{m+1}) = {\rm Forb}^*(P_{m+1})$. Thus, \ref{necklace} implies a result of ~Gy{\'a}rf{\'a}s (see \cite{gyarfas:perfect}) that the class ${\rm Forb}(P_{m+1})$ is $\chi$-bounded by an exponential function (we note, however, that our $\chi$-bounding function is faster growing than that of Gy{\'a}rf{\'a}s). 
\\
\\
We end this section with some terminology and notation that will be used throughout the paper. The vertex-set of a graph $G$ is denoted by $V_G$. Given a vertex $v \in V_G$, $\Gamma_G(v)$ is the set of all neighbors of $v$ in $G$. The complement of $G$ is denoted by $\overline{G}$. Given a set $S \subseteq V_G$, the subgraph of $G$ induced by $S$ is denoted by $G[S]$; if $S = \{v_1,...,v_n\}$, we sometimes write $G[v_1,...,v_n]$ instead of $G[S]$. Given a set $S \subseteq V_G$, we denote by $G \smallsetminus S$ the graph obtained by deleting from $G$ all the vertices in $S$; if $S = \{v\}$, we often write $G \smallsetminus v$ instead of $G \smallsetminus S$. Given a vertex $v \in V_G$ and a set $A \subseteq V_G \smallsetminus \{v\}$, we say that $v$ is {\em complete} (respectively: {\em anti-complete}) to $A$ provided that $v$ is adjacent (respectively: non-adjacent) to every vertex in $A$; we say that $v$ is {\em mixed} on $A$ provided that $v$ is neither complete nor anti-complete to $A$. Given disjoint sets $A,B \subseteq V_G$, we say that $A$ is {\em complete} (respectively: {\em anti-complete}) to $B$ provided that every vertex in $A$ is complete (respectively: anti-complete) to $B$.

\section{Subdivisions of the Paw} 
\label{sec:paw}

In this section, we give a structure theorem for paw$^*$-free graphs (\ref{structure paw}), and then use it to derive the fact that ${\rm Forb}^*(paw)$ is $\chi$-bounded by a linear function (\ref{bound paw}). We first need a definition: a graph is said to be {\em complete multipartite} if its vertex-set can be partitioned into stable sets, pairwise complete to each other. 
\begin{theorem} \label{structure paw} A graph $G$ is paw$^*$-free if and only if each of its components is either a tree, a chordless cycle, or a complete multipartite graph. 
\end{theorem}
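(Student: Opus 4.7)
My plan is to prove each direction separately. \emph{The ``if'' direction} is a case check: trees contain no cycle; chordless cycles have maximum degree $2$, while every paw$^*$ has a degree-$3$ branch vertex; and in a complete multipartite graph the only induced cycles have length $3$ or $4$, and a routine inspection shows that no additional vertex can serve as the start of a pendant path attached to such a cycle without being forced to lie in two distinct parts simultaneously.

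\emph{For the ``only if'' direction}, let $G$ be a connected paw$^*$-free graph. If $G$ is acyclic, it is a tree; otherwise let $\gamma$ be a shortest cycle of $G$. If $|\gamma|\geq 5$, I claim $G=\gamma$: a vertex $v\notin\gamma$ with a single neighbor on $\gamma$ gives a paw$^*$ immediately, while two or more neighbors of $v$ on $\gamma$ must be pairwise non-adjacent on $\gamma$ (by triangle-freeness), and the two closest such neighbors together with $v$ form a cycle shorter than $\gamma$. If $|\gamma|=4$, writing $\gamma=u_1u_2u_3u_4$, the same argument forces $\Gamma(u_1)=\Gamma(u_3)$ and $\Gamma(u_2)=\Gamma(u_4)$; setting $A:=\Gamma(u_2)$ and $B:=\Gamma(u_1)$, these are disjoint and independent (else a triangle appears), they exhaust $V(G)$ (a vertex at distance $2$ from $\gamma$ together with its neighbor in $A\cup B$ and three $\gamma$-vertices induces a $C_4$ with a pendant), and $A$ is complete to $B$ (a non-edge $ab$ with $a\in A\setminus\gamma$, $b\in B\setminus\gamma$ yields a paw$^*$ on $\{a,u_2,u_1,u_3,b\}$, namely the $C_4$ through $u_2,u_1,b,u_3$ with pendant $a$ at $u_2$). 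Hence $G$ is complete bipartite.

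The remaining case $|\gamma|=3$ is the main work. Fix a triangle $T=\{x_1,x_2,x_3\}$. A vertex with exactly one neighbor in $T$ forms a paw with $T$, so every vertex of $V(G)\setminus T$ has either $0$ or at least $2$ neighbors in $T$; and a vertex at distance exactly $2$ from $T$ induces (via its predecessor on a shortest path to $T$ together with two of the predecessor's $T$-neighbors) a further paw, so by connectedness every vertex of $G$ lies in $T$ or has at least $2$ neighbors in $T$. Partitioning $V(G)\setminus T$ into $V_{12},V_{13},V_{23},V_{123}$ according to $T$-neighborhood, a handful of paw-detecting arguments on $4$-vertex subsets show that $\{x_k\}\cup V_{ij}$ is an independent set complete to its complement for each permutation $(i,j,k)$ of $(1,2,3)$ (for instance, $v\in V_{23}$ and $w\in V_{13}$ with $vw$ a non-edge would induce on $\{v,w,x_1,x_3\}$ a triangle $x_1x_3w$ with pendant $v$, i.e.\ a paw), and that non-adjacency on $V_{123}$ is transitive (a would-be counterexample with $u_1u_3$ an edge and $u_1u_2,u_2u_3$ non-edges induces with $x_1$ the paw on triangle $u_1u_3x_1$ with pendant $u_2$). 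These facts fit together into the desired complete multipartite partition. \emph{The main obstacle} is precisely this triangle case: the two cycle subcases are essentially forced by minimality of $\gamma$ and a single paw$^*$-configuration, whereas the triangle case requires many small paw-detection steps, carefully combined, to promote local constraints into the global equivalence-relation structure of a complete multipartite graph.
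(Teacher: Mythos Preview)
Your proof is correct. The paper organizes the ``only if'' direction differently: rather than splitting on the length of a shortest cycle, it first asks whether $G$ is both triangle-free and square-free. If so, a shortest-cycle argument (essentially your $|\gamma|\ge 5$ case) shows $G$ is a tree or a chordless cycle. If $G$ contains a triangle or a square, the paper takes an inclusion-maximal complete multipartite induced subgraph $H$ that contains a cycle and shows in one short argument that no vertex outside $H$ can attach without creating a paw$^*$, whence $H=G$. This maximal-substructure trick handles your girth-$3$ and girth-$4$ cases uniformly and sidesteps the longer neighbourhood-partition analysis you carry out in the triangle case. On the other hand, your route is fully explicit---you actually exhibit the multipartite classes---and your triangle case amounts to a direct proof of (the relevant piece of) Olariu's paw-free characterization, which the paper mentions but deliberately avoids invoking.
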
 
\begin{proof} 
The `if' part is established by routine checking. For the `only if' part, suppose that $G$ is a connected paw$^*$-free graph. Our goal is to show that if $G$ is both triangle-free and square-free, then $G$ is either a tree or a chordless cycle, and otherwise $G$ is a complete multipartite graph. 
\\
\\
Suppose first that $G$ is both triangle-free and square-free. If $G$ contains no cycles, then it is a tree, and we are done. So assume that $G$ does contain a cycle, and let $v_0-v_1-...-v_{k-1}-v_0$ (with the indices in $\mathbb{Z}_k$) be a cycle in $G$ of length as small as possible; note that the minimality of $k$ implies that this cycle is induced, and the fact that $G$ is triangle-free and square-free implies that $k \geq 5$. If $V_G = \{v_0,v_1,...,v_{k-1}\}$, then $G$ is a chordless cycle, and we are done. So assume that $\{v_0,...,v_{k-1}\} \subsetneqq V_G$. Since $G$ is connected, there exists a vertex $v \in V_G \smallsetminus \{v_0,...,v_{k-1}\}$ that has a neighbor in $\{v_0,...,v_{k-1}\}$. Note that $v$ must have at least two neighbors in $\{v_0,v_1,...,v_{k-1}\}$, for otherwise, $G[v,v_0,v_1,...,v_{k-1}]$ would be a paw$^*$. By symmetry, we may assume that for some $i \in \mathbb{Z}_k \smallsetminus \{0\}$, $v$ is complete to $\{v_0,v_i\}$ and anti-complete to $\{v_1,...,v_{i-1}\}$ in $G$. By the minimality of $k$, the cycle $v-v_0-v_1-...-v_i-v$ is of length at least $k$, and so it follows that either $i = k-2$ or $i = k-1$. But then $v-v_i-v_{i+1}-...-v_0-v$ is a (not necessarily induced) cycle of length at most four in $G$, which contradicts the fact that $G$ is triangle-free and square-free. 
\\
\\
It remains to consider the case when $G$ contains a triangle or a square. Let $H$ be an inclusion-wise maximal complete multipartite induced subgraph of $G$ such that $H$ contains a cycle. (The existence of such a graph $H$ follows from the fact that a triangle or a square is itself a complete multipartite graph that contains a cycle.) If $G = H$, then $G$ is complete multipartite, and we are done. So assume that this is not the case. Since $G$ is connected, there exists a vertex $v \in V_G \smallsetminus V_H$ with a neighbor in $V_H$. 
\\
\\
Let $H_1,H_2,...,H_k$ be a partition of $V_H$ into stable sets, pairwise complete to each other. First, we claim that $v$ is not mixed on any set among $H_1,...,H_k$. Suppose otherwise. By symmetry, we may assume that $v$ is adjacent to some $h_1 \in H_1$ and non-adjacent to some $h_1' \in H_1$. Then $v$ is anti-complete to $H_2 \cup ... \cup H_k$, for if $v$ had a neighbor $h \in H_2 \cup ... \cup H_k$, then $G[v,h,h_1,h_1']$ would be a paw. Now, since $H$ contains a cycle, we know that $|H_2 \cup ... \cup H_k| \geq 2$; fix distinct vertices $h,h' \in H_2 \cup ... \cup H_k$. But if $hh'$ is an edge then $G[h,h',h_1,v]$ is a paw, and if $hh'$ is a non-edge then $G[h,h',h_1,h_1',v]$ is a paw$^*$. This proves our claim. Now $v$ is anti-complete to at least two sets among $H_1,...,H_k$ (say $H_1$ and $H_2$), for otherwise, $G[V_H \cup \{v\}]$ would contradict the maximality of $H$. Let $h \in H_3 \cup ... \cup H_k$ be some neighbor of $v$, and fix $h_1 \in H_1$ and $h_2 \in H_2$. Then $G[h_1,h_2,h,v]$ is a paw, which is a contradiction. This completes the argument. 
\end{proof} 
\noindent 
We note that our structure theorem for paw$^*$-free graphs (\ref{structure paw}) is similar to the structure theorem for paw-free graphs (due to Olariu \cite{Olariu}), which states that a graph $G$ is paw-free if and only if every component of $G$ is either triangle-free or complete multipartite. In fact, our proof of \ref{structure paw} could be slightly shortened by using \cite{Olariu}, but in order to keep the section  self-contained, we include an  independent proof. We now turn to  proving that the class ${\rm Forb}^*(paw)$ is $\chi$-bounded by a linear function. 
\begin{theorem} \label{bound paw} ${\rm Forb}^*(paw)$ is $\chi$-bounded by the function $f:\mathbb{N} \rightarrow \mathbb{R}$ defined by $f(2)=3$ and for all $n \neq 2$, $f(n) = n$. 
\end{theorem}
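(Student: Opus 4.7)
The plan is to apply the structure theorem \ref{structure paw} componentwise and then do a short case analysis on $\omega(G)$.

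First I would recall that $\chi(G)$ equals the maximum of $\chi(C)$ over all components $C$ of $G$, and that $\omega(G)$ is the maximum of $\omega(C)$ over all components $C$. So it suffices to bound $\chi(C)$ for each of the three types of components allowed by \ref{structure paw}. A tree is bipartite and so has $\chi \leq 2$. A chordless cycle has $\chi \leq 3$ (with equality iff the cycle has odd length). A complete multipartite graph with $k$ parts has both $\chi$ and $\omega$ equal to $k$.

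Next I would split into cases according to $\omega(G)$. If $\omega(G)=1$ then $G$ has no edges, so $\chi(G)=1=f(1)$. If $\omega(G)=2$ then $G$ is triangle-free; every complete multipartite component is then bipartite, so every component has $\chi \leq 3$, giving $\chi(G)\leq 3 = f(2)$ (this is where the jump from $n$ to $3$ comes in, and it is forced by the possible presence of an induced odd cycle such as $C_5$). Finally, if $\omega(G)=n\geq 3$, then tree components satisfy $\chi \leq 2 \leq n$, chordless cycle components satisfy $\chi\leq 3 \leq n$, and complete multipartite components satisfy $\chi = \omega \leq \omega(G)=n$; taking the max over components yields $\chi(G)\leq n = f(n)$.

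There is essentially no obstacle here once \ref{structure paw} is in hand: the entire argument is a routine componentwise coloring together with the observation that the triangle-free case is the only one where the bound $n$ is insufficient, precisely because an induced odd cycle forces three colors while contributing no triangle. (One could also remark, though it is not required by the statement, that the bound is tight: $C_5$ shows $f(2)=3$ is needed, and $K_n$ shows $f(n)=n$ is needed for $n\neq 2$.)
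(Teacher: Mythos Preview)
Your proposal is correct and follows essentially the same approach as the paper: reduce to connected components, invoke \ref{structure paw}, and observe that each of the three component types has $\chi$ at most $\max\{3,\omega\}$. The paper compresses your case analysis into the single line ``in each of these cases, $\chi(G)=3$ or $\chi(G)=\omega(G)$,'' but the content is identical.
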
 
\begin{proof} 
Let $G \in {\rm Forb}^*(paw)$. We may assume that $G$ is connected (otherwise, we consider the components of $G$ separately). By \ref{structure paw} then, $G$ is either a tree, or a chordless cycle, or a complete multipartite graph, and in each of these cases, we have that $\chi(G) = 3$ or $\chi(G) = \omega(G)$. 
\end{proof} 
\noindent 
It is easy to see that the $\chi$-bounding function given in \ref{bound paw} is the best possible for the class ${\rm Forb}^*(paw)$. Indeed, on the one hand, we have that $\omega(G) \leq \chi(G)$ for every graph $G$, and on the other hand, there exist paw$^*$-free graphs with clique number $2$ and chromatic number $3$ (any chordless cycle of odd length greater than three is such a graph.) 

\section{Decomposing Bull$^*$-Free Graphs} 
\label{sec:bullDecomp} 
In this section, we prove a decomposition theorem for bull$^*$-free graphs. We begin with some definitions. Let $G$ be a graph. A {\em hole} in $G$ is an induced cycle in $G$ of length at least four. An {\em anti-hole} in $G$ is an induced subgraph of $G$ whose complement is a hole in $\overline{G}$. We often denote a hole (respectively: anti-hole) $H$ in $G$ by $h_0-h_1-...-h_k-h_0$, where $V_H = \{h_0,h_1,...,h_k\}$ and $h_0-h_1-...-h_k-h_0$ is an induced cycle in $G$ (respectively: in $\overline{G}$). The {\em length} of a hole or anti-hole is the number of vertices that it contains. An {\em odd hole} (respectively: {\em odd anti-hole}) is a hole (respectively: anti-hole) of odd length. Given a vertex $v \in V_G$ and a set $S \subseteq V_G \smallsetminus \{v\}$, we say that $v$ is a {\em center} (respectively: {\em anti-center}) for $S$ or for $G[S]$ provided that $v$ is complete (respectively: anti-complete) to $S$. We say that $G$ is {\em basic} if it contains neither an odd hole with an anti-center nor an odd anti-hole with an anti-center. A non-empty set $S \subsetneqq V_G$ is said to be a {\em homogeneous set} in $G$ provided that no vertex in $V_G \smallsetminus S$ is mixed on $S$; a homogeneous set $S$ in $G$ is said to be {\em proper} if $|S| \geq 2$. We say that a vertex $v \in V_G$ is a {\em cut-vertex} of $G$ provided that $G \smallsetminus v$ has more components than $G$. Our goal in this section is to prove the following decomposition theorem. 
\begin{theorem} \label{decomposition bull} Let $G \in {\rm Forb}^*(bull)$. Then either $G$ is basic, or it contains a proper homogeneous set or a cut-vertex.
\end{theorem}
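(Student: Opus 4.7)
The plan is to prove the contrapositive: assume that $G\in{\rm Forb}^*(bull)$ has no cut-vertex and no proper homogeneous set, and show that $G$ is basic. First, if $G$ is disconnected, then any connected component with at least two vertices forms a proper homogeneous set; if every component is a single vertex, then $G$ is edgeless and trivially basic. So I may assume $G$ is connected. If $|V_G|\le 2$, then $G$ has no induced cycle or anti-hole and is basic; hence I assume $|V_G|\ge 3$, and the absence of a cut-vertex then yields that $G$ is $2$-connected.

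Now suppose for contradiction that $G$ contains an odd hole $C=c_0-c_1-\cdots-c_{2k}-c_0$ (with $k\ge 2$) together with an anti-center $v\in V_G\smallsetminus V(C)$. Let $R$ be the connected component of $G\smallsetminus V(C)$ containing $v$, and let $A$ denote the set of vertices of $V(C)$ that have a neighbor in $R$. Since $G$ is connected, $A\neq\emptyset$; and since $G$ has no cut-vertex, $|A|\ge 2$ (otherwise the unique vertex of $A$ would separate $R$ from $V(C)\smallsetminus A$). Similarly, since $V(C)$ has at least $5\ge 2$ vertices and cannot be a homogeneous set of $G$, some vertex $u\in V_G\smallsetminus V(C)$ must be mixed on $V(C)$.

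The heart of the proof is to use $C$ as the (subdivided) triangle of the sought bull$^*$ and to produce two induced pendant paths attached at two of three judiciously chosen ``corner'' vertices of $C$. One pendant is extracted from the mixed vertex $u$ by analyzing its neighborhood pattern $\Gamma_G(u)\cap V(C)$ (a single neighbor, two consecutive neighbors, two non-consecutive neighbors, and so on); in each case one either directly reads off a pendant or uncovers a smaller bull$^*$ by the same principle. The second pendant comes from a carefully chosen induced path out of $v$ into $V(C)$ using the component $R$ and the vertices of $A$: the fact that $|A|\ge 2$, together with $v$ being anti-complete to $V(C)$, ensures such a path exists with interior anti-complete to the appropriate arc of $V(C)$. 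The odd length of $C$ is the crucial final ingredient: it lets us choose the three corners so that the three arcs of $C$ together with the two pendants form an induced bull$^*$ without any forbidden chord.

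The odd anti-hole case is handled separately. A length-$5$ odd anti-hole coincides with a length-$5$ odd hole and is already covered above. For an odd anti-hole $C'$ of length $2k+1\ge 7$ with anti-center $v'$, I would again force the existence of a mixed vertex $u'$ (from the no-homogeneous-set hypothesis applied to $V(C')$) and combine the rich path-and-cycle structure of $C'$ with $v'$ and $u'$ to exhibit a bull$^*$ explicitly; the argument parallels the odd-hole case but must be recast through the complement. The main obstacle I expect is the sheer volume of case analysis: every attachment pattern of $u$ (or $u'$), and every type of induced path from $v$ to $V(C)$ through $R$, must be verified to yield an induced bull$^*$ or a contradiction with one of the standing hypotheses. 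I anticipate the odd anti-hole case of length at least seven to be the most delicate, since the ``triangle'' of the bull$^*$ must be found within the anti-hole itself, and the interaction of the anti-center with the anti-hole must be analyzed through complement structure.
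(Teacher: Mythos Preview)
Your proposal is a sketch, not a proof, and its central steps are not carried out. More importantly, it omits the one external ingredient on which the paper's argument hinges: the Chudnovsky--Safra lemma (\ref{center}), which says that a bull-free graph containing an odd hole or odd anti-hole with both a center and an anti-center has a proper homogeneous set. The paper uses this to dispose of the long anti-hole case in a few lines (\ref{anti-hole}): if $H$ is an odd anti-hole of length at least seven with an anti-center $a$, one checks directly that any vertex $a'$ adjacent to $a$ and to some vertex of $H$ must in fact be complete to $H$, so a center exists and \ref{center} yields a homogeneous set. Your suggestion to ``recast through the complement'' does not come close to this, and without the lemma you would need a direct construction of a bull$^*$ inside the anti-hole configuration that you have not supplied and whose feasibility is far from clear.

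For the odd-hole case the paper's route is structural rather than constructive: it first classifies every vertex outside the hole $H$ as a center, anti-center, leaf, clone, or (when $|H|=5$) a star or non-adjacent clone (\ref{appropriate}); it then uses \ref{center} to exclude centers, shows that clones together with the corresponding hole vertex form a homogeneous set (\ref{no clone}), and deduces that what remains are anti-centers, leaves at a single vertex $h_i$, and stars at $h_i$ anti-complete to those leaves---forcing $h_i$ to be a cut-vertex (\ref{hole}). Your plan to extract two pendant paths directly from a mixed vertex $u$ and from the component $R$ containing the anti-center $v$ skips exactly the classification step that controls how such paths can meet $C$. In particular, the claim that a path from $v$ into $V(C)$ through $R$ has interior ``anti-complete to the appropriate arc'' is unjustified: the first vertex on such a path with a neighbor in $C$ could attach as a clone or a star rather than a leaf, and then the intended bull$^*$ fails to be induced. (In the paper this is precisely where \ref{appropriate} and the argument in \ref{anti-center leaf} are used to force that vertex to be a leaf.) Finally, your remark that the odd length of $C$ is ``the crucial final ingredient'' for assembling the bull$^*$ is misplaced: a bull$^*$ is any cycle of length at least three with two pendant paths at distinct vertices, so the parity of $|C|$ plays no role at that stage; oddness enters only through the definition of ``basic.''
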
 
\noindent 
We will need the following result, which is an immediate consequence of $1.4$ from \cite{Maria}. 
\begin{theorem}[Chudnovsky and Safra \cite{Maria}] \label{center} Let $G \in {\rm Forb}^*(bull)$. If $G$ contains an odd hole with a center and an anti-center, or an odd anti-hole with a center and an anti-center, then $G$ has a proper homogeneous set. 
\end{theorem}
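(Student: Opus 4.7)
Given $G \in {\rm Forb}^*(bull)$, my plan is to directly exhibit one of the three alternatives. A quick reduction: if $G$ is disconnected, any non-trivial component is itself a proper homogeneous set (and if all components are singletons, $G$ is edgeless and hence trivially basic). So I may assume $G$ is connected; if $G$ has a cut-vertex or is already basic, we are done. Otherwise $G$ is $2$-connected and not basic, so $G$ contains either an odd hole or an odd anti-hole with an anti-center. My task is to show, under these remaining assumptions, that $G$ has a proper homogeneous set.

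Focus first on the odd hole case: let $H$ be an odd hole with anti-center $v$. If $H$ also has a center in $G$, then Theorem \ref{center} immediately gives a proper homogeneous set. So assume $H$ has no center. The set $V_H$ has size at least $5$ and is a proper subset of $V_G$ (since $v \notin V_H$); if $V_H$ itself is a homogeneous set, we are done. Otherwise, some vertex $u \in V_G \smallsetminus V_H$ is mixed on $V_H$, with $N(u) \cap V_H$ a non-empty proper subset of $V_H$. The heart of the argument is then to derive a contradiction from this configuration, forcing $V_H$ (or some other set) to be homogeneous. I combine the anti-center $v$, the mixed vertex $u$, the no-center property of $H$, and $2$-connectivity to exhibit an induced subdivision of the bull. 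By the fan version of Menger's theorem, there exist two internally vertex-disjoint induced paths $P_1, P_2$ from $v$ to distinct vertices $h_i, h_j \in V_H$, internally anti-complete to $V_H$, and of minimum total length (eliminating shortcut chords between them). The subdivided triangle of the bull$^*$ is then built from a short arc of $H$ together with an attaching chord (from $u$ or from the end of $P_1$), while the two pendant paths come from $P_2$ ending at $v$ and from a vertex $h_\ell \in V_H$ non-adjacent to the relevant mixed vertex --- such an $h_\ell$ existing precisely because $H$ has no center.

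The odd anti-hole with anti-center case is handled by an analogous argument working on the corresponding dual local structure (in particular, "mixed on $V_H$" and "attachment chord" are replaced by their anti-hole counterparts). I expect the main obstacle to be the exhaustive case analysis on the attachments of $u$, $P_1$, and $P_2$ to $V_H$: each can attach at a single vertex, at several consecutive vertices of $H$, or at several non-consecutive ones, and $u$ may or may not coincide with a vertex of $P_1$ or $P_2$. In each configuration the challenge is to pick exactly the right short arc of $H$ together with the right pendant endpoints so that the resulting subgraph is truly induced and is a genuine subdivision of the bull. The no-center property of $H$ is the recurring tool that makes every case go through, by always providing a free landing site on $H$ for one of the two pendant paths, while $2$-connectivity is what delivers the disjoint paths $P_1, P_2$ from $v$ needed to build the rest of the configuration.
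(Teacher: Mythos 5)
Your proposal does not prove the statement you were asked to prove. Theorem \ref{center} is the assertion that if a bull$^*$-free graph contains an odd hole (or odd anti-hole) with both a center and an anti-center, then it has a proper homogeneous set; in the paper this is cited as an external result (an immediate consequence of Theorem 1.4 of Chudnovsky and Safra \cite{Maria}) and no proof is given. Your argument, by contrast, opens with ``my plan is to directly exhibit one of the three alternatives,'' sets up the trichotomy basic / proper homogeneous set / cut-vertex, and reduces to the case that $G$ is 2-connected and not basic --- this is the outline of Theorem \ref{decomposition bull}, not of Theorem \ref{center}. Worse, within your sketch you explicitly invoke Theorem \ref{center} (``If $H$ also has a center in $G$, then Theorem \ref{center} immediately gives a proper homogeneous set''), which is circular if the goal is to prove Theorem \ref{center} itself.

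Beyond the misidentification of the target statement, the argument is not close to complete even on its own terms: the appeal to a ``fan version of Menger's theorem'' to extract two internally disjoint paths from the anti-center to the hole, followed by ``I expect the main obstacle to be the exhaustive case analysis,'' leaves the actual construction of an induced bull$^*$ unverified in every case. The paper's route to its decomposition theorem is quite different: it does not invoke 2-connectivity or Menger's theorem at all, but instead proves a sequence of attachment lemmas (\ref{appropriate}, \ref{anti-center leaf}, \ref{bull cleaning}, \ref{no clone}) classifying how vertices outside an odd hole with an anti-center attach to it, and then uses these to exhibit either a homogeneous set or a cut-vertex directly. If your aim was to reprove \ref{center} from first principles, you would need an argument that, given an odd hole (or anti-hole) $H$ with both a center $c$ and an anti-center $a$, shows the vertex set partitions in a way that yields a homogeneous set --- that is the content of \cite{Maria}, and nothing in your sketch engages with the presence of the center.
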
 
\noindent 
The proof of \ref{decomposition bull} proceeds as follows. We assume that a graph $G \in {\rm Forb}^*(bull)$ is not basic, and then we consider two cases: when $G$ contains an odd anti-hole of length at least seven with an anti-center; and when $G$ contains an odd hole with an anti-center. In the former case, we show that $G$ contains a proper homogeneous set (see~\ref{anti-hole} below). The latter case is more difficult, and our approach is to prove a series of lemmas that describe how vertices that lie outside of our odd hole ``attach'' to this odd hole and to each other, and then to use these results to prove that $G$ contains a proper homogeneous set or a cut-vertex (see~\ref{hole}). Since an anti-hole of length five is also a hole of length five, these two results (\ref{anti-hole} and \ref{hole}) imply \ref{decomposition bull}. 
\begin{theorem} \label{anti-hole} Let $G \in {\rm Forb}^*(bull)$, let $h_0-h_1-...-h_{k-1}-h_0$ (with $k \geq 7$ and the indices in $\mathbb{Z}_k$) be an odd anti-hole in $G$, and set $H = \{h_0,h_1,...,h_{k-1}\}$. Assume that $G$ contains an anti-center for $H$. Then $G$ contains a proper homogeneous set. 
\end{theorem}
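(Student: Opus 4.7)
I plan to prove \ref{anti-hole} by showing that the set $H$ is itself a proper homogeneous set of $G$. Since $|H| = k \geq 7 \geq 2$, and the anti-center $a$ lies in $V_G \smallsetminus H$, the set $H$ is automatically a proper non-empty subset of $V_G$; so it suffices to verify that no vertex of $V_G \smallsetminus H$ is mixed on $H$.

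First, I dispose of the \emph{center case}: if some vertex $v \in V_G \smallsetminus H$ is complete to $H$, then $H$ is an odd anti-hole in $G$ possessing both a center ($v$) and an anti-center ($a$), so \ref{center} immediately provides a proper homogeneous set of $G$. I may therefore assume that every vertex of $V_G \smallsetminus H$ has at least one non-neighbor in $H$.

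Second, the \emph{mixed case}, which is the heart of the argument. Suppose, for contradiction, that some $v \in V_G \smallsetminus H$ has both a neighbor and a non-neighbor in $H$. I will construct an induced bull subdivision in $G$, contradicting $G \in {\rm Forb}^*(bull)$. The construction exploits the density of $G[H]$: since $\overline{G}[H]$ is the cycle $C_k$ with $k \geq 7$, each $h_i$ has precisely two $G$-non-neighbors in $H$ (namely $h_{i-1}$ and $h_{i+1}$), and so $G[H]$ contains abundantly many induced triangles $\{h_a,h_b,h_c\}$. Given the attachment $N_G(v) \cap H$, I choose such a triangle so that $v$ is adjacent to exactly one of its three vertices and hence \emph{pendants} onto it. A second pendant is then supplied either by a further vertex $h_d \in H$ that is adjacent to exactly one of the other two triangle vertices and non-adjacent to $v$, or---when no such $h_d$ can be found inside $H$---as an induced path of length at least two that reaches the anti-center $a$ (routed through $v$ when $va$ is an edge, and otherwise through an auxiliary vertex of $H$), producing an induced bull subdivision rather than a genuine bull.

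The main obstacle I foresee is the case analysis on the attachment pattern of $v$ onto $H$, viewed as a subset of the cycle $\overline{G}[H]$. For each rotational and reflective type of $N_G(v) \cap H$ I must exhibit a concrete triangle in $H$ together with either a second pendant vertex in $H$ or an induced pendant path through $a$, and verify that the resulting five-or-more-vertex configuration has exactly the adjacencies required of a bull subdivision and no additional chords. The hypothesis $k \geq 7$ is critical here: it supplies enough room inside $H$ both to place the triangle and to locate the second pendant (or the inner vertices of an extended pendant path) with the correct non-adjacencies to the triangle and to $v$, across every mixed attachment type.
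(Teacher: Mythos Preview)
Your overall strategy---prove that $H$ itself is homogeneous by showing that any mixed vertex forces an induced bull$^*$---cannot succeed. Here is a counterexample. Take $k=7$ and let $G'$ be the complement of the graph obtained from $C_7$ by attaching one pendant edge at $h_0$: that is, $V_{G'}=\{h_0,\dots,h_6,v\}$, the set $H=\{h_0,\dots,h_6\}$ induces the anti-$C_7$, and $v$ is adjacent to $h_1,\dots,h_6$ but not to $h_0$. Since $\overline{G'}$ is triangle-free we have $\alpha(G')=2$; but every subdivision of the bull contains three pairwise non-adjacent vertices (the two degree-one vertices $y_1,y_2$ together with $x_3$), so $G'$ contains no induced bull$^*$. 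Now form $G''$ by adding an isolated vertex $a$. Then $G''\in{\rm Forb}^*({\rm bull})$, the set $H$ is an odd anti-hole of length seven with anti-center $a$, and $v$ is mixed on $H$. Hence $H$ is \emph{not} a homogeneous set of $G''$, and there is no bull$^*$ anywhere in $G''$ for your mixed-case argument to produce. (The conclusion of \ref{anti-hole} does hold for $G''$: it is disconnected, so the component $H\cup\{v\}$ is a proper homogeneous set---but that is not the homogeneous set your argument is aiming for.)

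The specific breakdown in your sketch is the fallback when the second pendant cannot be found inside $H$ and $va$ is a non-edge: you propose routing an induced path to $a$ ``through an auxiliary vertex of $H$,'' but $a$ is anti-complete to $H$, so no such path can exist. The paper sidesteps the whole difficulty by first reducing to the connected case and then working, not with an arbitrary mixed vertex, but with a vertex $a'$ that is \emph{adjacent to the anti-center} $a$ and has a neighbour in $H$; the edge $aa'$ then supplies one pendant in every configuration, and a short case analysis forces $a'$ to be complete to $H$, after which \ref{center} yields the proper homogeneous set.
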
 
\begin{proof} 
We may assume that $G$ is connected, for otherwise, $G$ contains a proper homogeneous set and we are done. Since $G$ is connected and contains an anti-center for $H$, there exist adjacent $a,a' \in V_G \smallsetminus H$ such that $a$ is anti-center for $H$ and $a'$ has a neighbor in $H$. Our goal is to show that $a'$ is a center for $H$, for then we are done by \ref{center}. 
\\
\\
First, we claim that there is no index $i \in \mathbb{Z}_k$ such that $a'$ is anti-complete to $\{h_i,h_{i+1}\}$. Suppose otherwise. Since $a'$ has a neighbor in $H$, we may assume by symmetry that $a'$ is adjacent to $h_0$ and anti-complete to $\{h_1,h_2\}$. But then if $a'h_4$ is an edge, then $G[h_0,h_1,h_4,a,a']$ is a bull; and if $a'h_4$ is a non-edge, then $G[h_0,h_1,h_2,h_4,a']$ is a bull. This proves our claim. 
\\
\\
Next, since $H$ has an odd number of vertices, there exists some $i \in \mathbb{Z}_k$ such that $a'$ is either complete or anti-complete to $\{h_i,h_{i+1}\}$; by what we just showed, the latter is impossible, and so the former must hold. Now, if $a'$ is not a center for $H$, then we may assume by symmetry that $a'$ is non-adjacent to $h_0$ and complete to $\{h_1,h_2\}$; but then $a'h_{k-1}$ is an edge (because $a'$ is not anti-complete to $\{h_{k-1},h_0\}$), and so $G[h_0,h_2,h_{k-1},a,a']$ is a bull. Thus, $a'$ is a center for $H$, which completes the argument. 
\end{proof} 
\noindent 
For the remainder of this section, we focus on graphs in ${\rm Forb}^*(bull)$ that contain an odd-hole with an anti-center. We begin with some definitions. Let $G$ be a graph, let $h_0-h_1-...-h_{k-1}-h_0$ (with $k \geq 5$ and the indices in $\mathbb{Z}_k$) be a hole in $G$, let $H = \{h_0,h_1,...,h_{k-1}\}$, and let $v \in V_G \smallsetminus H$. Then for all $i \in \mathbb{Z}_k$: 
\begin{itemize} 
\item $v$ is a {\em leaf for $H$ at $h_i$} if $v$ is adjacent to $h_i$ and anti-complete to $H \smallsetminus \{h_i\}$; 
\item $v$ is a {\em star for $H$ at $h_i$} if $v$ is complete to $H \smallsetminus \{h_i\}$ and non-adjacent to $h_i$; 
\item $v$ is an {\em adjacent clone for $H$ at $h_i$} if $v$ is complete to $\{h_{i-1},h_i,h_{i+1}\}$ and anti-complete to $H \smallsetminus \{h_{i-1},h_i,h_{i+1}\}$; 
\item $v$ is a {\em non-adjacent clone for $H$ at $h_i$} if $v$ is complete to $\{h_{i-1},h_{i+1}\}$ and anti-complete to $H \smallsetminus \{h_{i-1},h_{i+1}\}$; 
\item $v$ is a {\em clone for $H$ at $h_i$} if $v$ is an adjacent clone or a non-adjacent clone for $H$ at $h_i$. 
\end{itemize} 
We say that $v$ is a {\em leaf} (respectively: {\em star}, {\em adjacent clone}, {\em non-adjacent clone}, {\em clone}) for $H$ if there exists some $i \in \mathbb{Z}_k$ such that $v$ is a leaf (respectively: star, adjacent clone, non-adjacent clone, clone) for $H$ at $h_i$. If $|H| = k$ is odd, then we say that a vertex $v \in V_G \smallsetminus H$ is {\em appropriate} for $H$ or for $G[H]$ provided that one of the following holds: 
\begin{itemize} 
\item $v$ is a center for $H$; 
\item $v$ is an anti-center for $H$; 
\item $v$ is a leaf for $H$; 
\item $v$ is an adjacent clone for $H$; 
\item $v$ is a non-adjacent clone for $H$ and $|H| = 5$; 
\item $v$ is a star for $H$ and $|H| = 5$. 
\end{itemize} 
\begin{theorem} \label{appropriate} Let $G \in {\rm Forb}^*(bull)$, let $h_0-h_1-...-h_{k-1}-h_0$ (with $k \geq 5$ and the indices in $\mathbb{Z}_k$) be an odd hole in $G$, and set $H = \{h_0,h_1,...,h_{k-1}\}$. Then every vertex in $V_G \smallsetminus H$ is appropriate for $H$. 
\end{theorem}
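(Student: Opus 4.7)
My plan is a case analysis on $N := \Gamma_G(v) \cap H$ for a fixed $v \in V_G \smallsetminus H$, viewing $N$ as a subset of the cyclic sequence $h_0, h_1, \ldots, h_{k-1}$ decomposed into its maximal arcs of consecutive neighbours (separated by gaps of non-neighbours). The trivial endpoints $N \in \{\emptyset, H\}$ and $|N| = 1$ are the anti-center, center, and leaf patterns respectively. For every other $N$, the goal is either to identify $N$ with one of the remaining appropriate patterns (adjacent clone, or when $|H| = 5$ also a non-adjacent clone or a star) or to exhibit an induced bull$^*$ on a $5$- to $7$-vertex subset of $\{v\} \cup H$, contradicting $G \in {\rm Forb}^*(bull)$.

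Three structural reductions pin down the possible arc-lengths of $N$. First, if $\{h_i, h_{i+1}\}$ is a maximal arc of $N$ of length exactly two, then $\{v, h_{i-1}, h_i, h_{i+1}, h_{i+2}\}$ induces a bull: the triangle is $v h_i h_{i+1}$ and the pendants are $h_{i-1}, h_{i+2}$, where the required non-edges $h_{i-1} h_{i+1}$, $h_i h_{i+2}$, $h_{i-1} h_{i+2}$, $v h_{i-1}$, $v h_{i+2}$ all hold thanks to hole-distance at least two (which requires only $k \geq 5$) and the maximality of the arc. Hence every arc of $N$ has length one or length at least three. Second, an arc $\{h_0, \ldots, h_{\alpha-1}\}$ of length $\alpha \geq 4$ yields a bull on $\{v, h_0, h_1, h_j, h_{k-1}\}$ for any $j$ with $3 \leq j \leq \alpha - 1$: the triangle is $v h_0 h_1$, with pendants $h_j$ at $v$ and $h_{k-1}$ at $h_0$, and the only delicate non-edge $h_j h_{k-1}$ holds as soon as $k \geq 7$. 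The same construction works for the star borderline $\alpha = k - 1$ once $k \geq 7$, and it fails exactly when $k = 5$, which is why stars are declared appropriate only for $|H| = 5$. Third, a length-$3$ arc cannot coexist with a further $v$-neighbour on $H$: if $N \supseteq \{h_0, h_1, h_2, h_j\}$ with $j \in \{4, \ldots, k-2\}$, then either $\{v, h_1, h_2, h_3, h_j\}$ (when $5 \leq j \leq k-2$) or $\{v, h_0, h_1, h_4, h_{k-1}\}$ (when $j = 4$, symmetrically for $j = k-2$) induces a bull. Therefore the only inappropriate $N$ that can escape these reductions has all its arcs of length exactly one.

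Now suppose every arc of $N$ has length one. Choose two neighbours $h_a, h_b$ of $v$ that are consecutive in $N$ along the cycle, at minimal cyclic distance $\delta \geq 2$. The isolation of the arcs $\{h_a\}$ and $\{h_b\}$ guarantees $h_{a-1}, h_{a+1}, \ldots, h_{b-1}, h_{b+1} \notin N$. If $\delta = 2$, the set $\{v, h_{a-1}, h_a, h_{a+1}, h_b, h_{b+1}\}$ induces a bull$^*$: the triangle is $v h_a h_b$, its edge $h_a h_b$ is subdivided through $h_{a+1}$, and the pendants are $h_{a-1}$ at $h_a$ and $h_{b+1}$ at $h_b$; the non-edge $h_{a-1} h_{b+1}$ (the only one involving a cyclic hole-distance possibly less than two) holds precisely when $k \geq 7$, and fails for $k = 5$, consistent with non-adjacent clones being appropriate only when $|H| = 5$. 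If $\delta \geq 3$, the analogous set $\{v, h_{a-1}, h_a, h_{a+1}, \ldots, h_b, h_{b+1}\}$ induces a bull$^*$ on the same template, with the subdivided edge $h_a h_b$ now of length $\delta$.

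The main obstacle throughout is the bookkeeping of induced-subgraph conditions: every claimed non-edge reduces either to ``$v \not\sim h_i$ because $h_i \notin N$'' (ensured by our arc-structure reductions) or to ``$h_i \not\sim h_j$ because the cyclic hole-distance is at least two'' (holding uniformly once $k \geq 7$). The only place where the hole-distance argument genuinely fails is when $k = 5$, and the two borderline patterns that arise at that point -- the non-adjacent clone and the star -- are exactly the two extra patterns declared appropriate for $|H| = 5$ in the statement.
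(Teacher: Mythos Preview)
Your proposal is correct and follows essentially the same strategy as the paper's proof: a case analysis on the neighbourhood pattern $N = \Gamma_G(v)\cap H$, ruling out each non-appropriate pattern by exhibiting an explicit induced bull or bull$^*$ on a small subset of $\{v\}\cup H$, with the two exceptional patterns (star and non-adjacent clone) surviving precisely when $k=5$.

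The organisation differs only cosmetically. The paper splits into the two cases ``$v$ has two adjacent neighbours in $H$'' versus ``$N$ is stable''; in the first it takes a maximal path $P$ in $G[N]$, shows $|P|\ge 3$ (your first reduction), shows $v$ is anti-complete to $H\smallsetminus P$ (your third reduction), and for $|P|\ge 4$ forces $|H|=5$ (the contrapositive of your second reduction). In the stable case it takes a closest pair $h_i,h_j$ and shows the obvious bull$^*$ forces $h_{i-1}=h_{j+1}$ or $h_{i-1}h_{j+1}$ an edge --- exactly your ``$\delta\ge 2$'' argument with pendants $h_{a-1},h_{b+1}$. So your ``arc'' language is just a relabelling of the paper's ``maximal path in $G[N]$'' language, and the concrete five- and six-vertex configurations you write down coincide with the paper's.

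One small imprecision: in your second reduction you say the bull on $\{v,h_0,h_1,h_j,h_{k-1}\}$ works ``for any $j$ with $3\le j\le \alpha-1$''. In fact the non-edge $h_jh_{k-1}$ requires $j\le k-3$, so you should say ``for some suitable $j$'' (e.g.\ $j=3$ always works once $k\ge 7$). This does not affect the validity of the argument.
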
 
\begin{proof} 
Fix $v \in V_G \smallsetminus H$. We may assume that $v$ has at least two neighbors and at least one non-neighbor in $H$, for otherwise, $v$ is a center, an anti-center, or a leaf for $H$, and we are done. 
\\
\\
Suppose first that $v$ has two adjacent neighbors in $H$. Fix a path $h_i-h_{i+1}-...-h_j$ of maximum length in $G[H \cap \Gamma_G(v)]$; set $P = \{h_i,h_{i+1},...,h_j\}$. Note first that $|P| \geq 3$, for otherwise, we would have that $j = i+1$, and then $G[v,h_{i-1},h_i,h_{i+1},h_{i+2}]$ would be a bull. Now, we claim that $v$ is anti-complete to $H \smallsetminus P$. Suppose otherwise. Fix $h_l \in H \smallsetminus P$ such that $vh_l$ is an edge; by the maximality of $P$, we know that $l \notin \{i-1,j+1\}$. Since neither $G[v,h_{i-1},h_i,h_{i+1},h_l]$ nor $G[v,h_{j-1},h_j,h_{j+1},h_l]$ is bull, we get that $l = i-2 = j+2$, and consequently, that $|H| = |P|+3$. Since $|H|$ is odd and $|P| \geq 3$, this means that $|P| \geq 4$, and so $G[v,h_{i-1},h_i,h_{i+1},h_{i+3}]$ is a bull, which is a contradiction. It follows that $v$ is anti-complete to $H \smallsetminus P$. Now, if $|P| = 3$, then $v$ is an adjacent clone for $H$ at $h_{i+1}$, and we are done. So assume that $|P| \geq 4$. Since $G[v,h_{i-1},h_i,h_{i+1},h_{i+3}]$ is not a bull, $h_{i+3}$ is adjacent to $h_{i-1}$, and so $|H| = 5$ and $v$ is a star for $H$ at $h_{i-1}$. 
\\
\\
Suppose now that $H \cap \Gamma_G(v)$ is a stable set. Fix distinct $i,j \in \mathbb{Z}_k$ such that $v$ is complete to $\{h_i,h_j\}$ and the path $h_i-h_{i+1}-...-h_j$ is as short as possible (in particular, $v$ is non-adjacent to the interior vertices of the path). Since the neighbors of $v$ in $H$ are pairwise non-adjacent, and $v$ is complete to $\{h_i,h_j\}$, we know that $v$ is anti-complete to $\{h_{i-1},h_{j+1}\}$. Since $G[v,h_{i-1},h_i,h_{i+1},...,h_j,h_{j+1}]$ is not a bull$^*$, this implies that either $h_{i-1} = h_{j+1}$, or $h_{i-1}h_{j+1}$ is an edge, and in either case, $v$ is anti-complete to $H \smallsetminus \{h_i,h_j\}$. We now know that the path $h_j-h_{j+1}-...-h_i$ has at most three edges and that $v$ is adjacent to the ends of this path and non-adjacent to its interior vertices. The minimality of the path $h_i-h_{i+1}-...-h_j$ then implies that $|H| \leq 6$. Since $|H|$ is odd and $|H| \geq 5$, it follows that $|H| = 5$. The minimality of the path $h_i-h_{i+1}-...-h_j$ now implies that $v$ is a non-adjacent clone for $H$ at $h_{i+1}$. This completes the argument. 
\end{proof} 
\noindent 
Given a graph $G$ with a hole $h_0-h_1-...-h_{k-1}-h_0$ (with $k \geq 5$ and the indices in $\mathbb{Z}_k$), and setting $H = \{h_0,h_1,...,h_{k-1}\}$, we let $A_H$ denote the set of all anti-centers for $H$ in $G$, and for all $i \in \mathbb{Z}_k$: 
\begin{itemize} 
\item we let $L_H^i$ denote the set of all leaves for $H$ at $h_i$; 
\item we let $N_H^i$ denote the set of all non-adjacent clones for $H$ at $h_i$; 
\item we let $C_H^i$ denote the set of all adjacent clones for $H$ at $h_i$; 
\item we let $S_H^i$ denote the set of all stars for $H$ at $h_i$. 
\end{itemize} 
\begin{theorem} \label{anti-center leaf} Let $G \in {\rm Forb}^*(bull)$, let $h_0-h_1-...-h_{k-1}-h_0$ (with $k \geq 5$ and the indices in $\mathbb{Z}_k$) be an odd hole in $G$, and set $H = \{h_0,h_1,...,h_{k-1}\}$. Assume that $G$ contains an anti-center for $H$, and that $G$ does not contain a proper homogeneous set. Then there exists an index $i \in \mathbb{Z}_k$ such that all of the following hold: 
\begin{itemize} 
\item[(i)] $L_H^i \neq \emptyset$, and for all $j \in \mathbb{Z}_k \smallsetminus \{i\}$, $L_H^j = \emptyset$; 
\item[(ii)] $A_H$ is not anti-complete to $L_H^i$; 
\item[(iii)] $A_H$ is anti-complete to $V_G \smallsetminus (A_H \cup L_H^i)$. 
\end{itemize} 
\end{theorem}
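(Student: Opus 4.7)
The plan is to classify the vertices of $V_G \smallsetminus H$ via \ref{appropriate}, use \ref{center} and the homogeneous-set hypothesis to rule out centers, use explicit bull$^*$ constructions to restrict the neighborhood of $A_H$, and finally use connectedness and case analysis to pin down the unique index $i$.

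By \ref{center} and the hypothesis, $H$ has no center in $G$; moreover $G$ is connected, since otherwise the component containing $H$ would be a proper homogeneous set. By \ref{appropriate}, every vertex of $V_G \smallsetminus H$ is an anti-center, a leaf, an adjacent clone, or (when $|H|=5$) a non-adjacent clone or a star. My first step is to show that $A_H$ is anti-complete to every adjacent clone, non-adjacent clone, and star, by exhibiting an induced bull or bull$^*$ in each forbidden case. If $a \in A_H$ were adjacent to an adjacent clone $c$ at $h_i$, then $G[a,c,h_i,h_{i+1},h_{i+2}]$ would be a bull (triangle $c,h_i,h_{i+1}$, with pendants $a$ on $c$ and $h_{i+2}$ on $h_{i+1}$). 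If $a$ were adjacent to a star $s$ at $h_i$ with $|H|=5$, then $G[a,s,h_{i-2},h_{i-1},h_i]$ would be a bull (triangle $s,h_{i-2},h_{i-1}$, pendants $a$ on $s$ and $h_i$ on $h_{i-1}$). If $a$ were adjacent to a non-adjacent clone $c$ at $h_i$ with $|H|=5$, then $G[a,c,h_{i-1},h_i,h_{i+1},h_{i+2}]$ would be a bull$^*$ whose cycle is the induced 4-cycle $c - h_{i-1} - h_i - h_{i+1} - c$ with pendants $a$ on $c$ and $h_{i+2}$ on $h_{i+1}$. Hence the neighbors of $A_H$ in $V_G \smallsetminus A_H$ all lie in $\bigcup_j L_H^j$.

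Connectedness then locates an index $i$ satisfying (ii). Take any $a \in A_H$ and any shortest path from $a$ to $H$; its second-to-last vertex $\ell$ has a neighbor in $H$, while the third-to-last vertex $a'$ (equal to $a$ when the path has length $2$) has no neighbor in $H$ by the shortest-path property, and is hence an anti-center. The first step then forces $\ell$ to be a leaf, say $\ell \in L_H^i$, and $a'$ is an anti-center adjacent to $\ell$. This proves (ii) and the non-emptyness part of (i).

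The main obstacle is proving that $L_H^j = \emptyset$ for every $j \neq i$. Suppose for contradiction $\ell' \in L_H^j$ with $j \neq i$, and let $d$ be the smaller of the two $H$-distances between $h_i$ and $h_j$. If $\ell \ell'$ is a non-edge, then $G[H \cup \{\ell, \ell'\}]$ is a bull$^*$: the cycle $H$ with anti-complete pendants $\ell$ at $h_i$ and $\ell'$ at $h_j$. Assume $\ell \ell'$ is an edge. If $d=1$, then $G[\ell, \ell', h_i, h_j, h_{i-1}, h_{j+1}]$ is a bull$^*$ whose cycle is the 4-cycle $\ell - h_i - h_j - \ell' - \ell$ with pendants $h_{i-1}$ on $h_i$ and $h_{j+1}$ on $h_j$; the only subtle non-edge, $h_{i-1} h_{j+1}$, holds because in $H$ these two vertices are at distance $\min(3, k-3) \geq 2$ for $k \geq 5$. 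If $d \geq 2$ and $a' \ell'$ is an edge, then $G[a', \ell, \ell', h_i, h_j]$ is a bull (triangle $a' \ell \ell'$, pendants $h_i$ on $\ell$ and $h_j$ on $\ell'$). Finally, if $d \geq 2$ and $a' \ell'$ is a non-edge, take the shorter $H$-arc from $h_i$ to $h_j$ (so $d \leq (k-1)/2$) and let $S = \{a', h_{i-1}, \ell, h_i, h_{i+1}, \ldots, h_j, \ell'\}$; then $G[S]$ is a bull$^*$ whose cycle is $\ell - h_i - h_{i+1} - \cdots - h_j - \ell' - \ell$, with pendants $a'$ on $\ell$ and $h_{i-1}$ on $h_i$. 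The delicate non-edge check is $h_{i-1} h_j$, which follows from $d \leq (k-1)/2 < k - 2$ for $k \geq 5$. Each case contradicts $G \in {\rm Forb}^*(bull)$.

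Conclusion (iii) is then immediate: $A_H$ is anti-complete to $H$ by definition, to centers vacuously, to all clones and stars by the first step, and to every $L_H^j$ with $j \neq i$ since those sets are empty, which by \ref{appropriate} exhausts $V_G \smallsetminus (A_H \cup L_H^i)$.
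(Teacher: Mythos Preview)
Your proof is correct and takes essentially the same approach as the paper's: both use \ref{center} to rule out centers, exhibit the same bulls and bull$^*$'s to show that neighbors of $A_H$ outside $A_H$ must be leaves, use connectedness to locate $i$, and then eliminate $L_H^j$ for $j\ne i$ via the same case analysis (on the edge $\ell\ell'$, the arc distance $d$, and the edge $a'\ell'$) with the same witnessing induced subgraphs. The only differences are cosmetic: you find the leaf via an explicit shortest-path argument rather than bare connectedness, and you treat stars and adjacent clones separately while the paper handles them in a single case.
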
 
\begin{proof} 
First, since $G$ does not contain a proper homogeneous set and $|V_G| \geq 3$, we know that $G$ is connected. Further, since $G$ does not contain a proper homogeneous set and contains an anti-center for $H$, \ref{center} implies that $G$ does not contain a center for $H$.
\\
\\
Now, we claim that every vertex in $V_G \smallsetminus (H \cup A_H)$ that has a neighbor in $A_H$ is a leaf for $H$. Suppose otherwise; fix adjacent $v \in V_G \smallsetminus (H \cup A_H)$ and $a \in A_H$ such that $v$ is not a leaf for $H$. Since $v$ is appropriate for $H$ (by \ref{appropriate}), and since $v$ is not a leaf, or a center, or an anti-center for $H$, we know that $v$ is either a star, or an adjacent clone, or a non-adjacent clone for $H$. Suppose first that $v$ is a star or an adjacent clone for $H$. Then there exists an index $i \in \mathbb{Z}_k$ such that $v$ is complete to $\{h_i,h_{i+1}\}$ and non-adjacent to $h_{i+2}$; but now $G[a,v,h_i,h_{i+1},h_{i+2}]$ is a bull. Suppose now that $v$ is a non-adjacent clone for $H$. Then there exists an index $i \in \mathbb{Z}_k$ such that $v$ is complete to $\{h_{i-1},h_{i+1}\}$ and anti-complete to $\{h_i,h_{i+2}\}$; but now $G[a,v,h_{i-1},h_i,h_{i+1},h_{i+2}]$ is a bull$^*$. This proves our claim. 
\\
\\
Since $G$ is connected and $A_H$ is non-empty, what we just showed implies that there exists an index $i \in \mathbb{Z}_k$ such that $L_H^i$ is non-empty and is not anti-complete to $A_H$. The only thing left to show is that $L_H^j = \emptyset$ for all $j \in \mathbb{Z}_k \smallsetminus \{i\}$. Suppose otherwise. Fix some $j \in \mathbb{Z}_k \smallsetminus \{i\}$ such that $L_H^j \neq \emptyset$. First, note that $L_H^j$ is complete to $L_H^i$, for if some $l_i \in L_H^i$ and $l_j \in L_H^j$ were non-adjacent, $G[H \cup \{l_i,l_j\}]$ would be a bull$^*$. By symmetry and the fact that $|H|$ is odd, we may assume that the path $h_i-h_{i+1}-...-h_j$ is shorter than the path $h_j-h_{j+1}-...-h_i$; since $|H| \geq 5$, this means that $i-1 \notin \{j,j+1\}$. Note furthermore that $j \neq i+1$, for otherwise, we fix some $l_i \in L_H^i$ and $l_{i+1} \in L_H^{i+1}$ and note that $G[l_i,l_{i+1},h_{i-1},h_i,h_{i+1},h_{i+2}]$ is a bull$^*$. Next, fix an anti-center $a$ for $H$ such that $a$ is adjacent to some $l_i \in L_H^i$. Fix $l_j \in L_H^j$. But then if $al_j$ is an edge, $G[a,l_i,l_j,h_i,h_j]$ is a bull; and if $al_j$ is a non-edge, then $G[a,l_i,l_j,h_{i-1},h_i,h_{i+1},...,h_{j-1},h_j]$ is a bull$^*$. This completes the argument. 
\end{proof} 
\begin{theorem} \label{bull cleaning} Let $G \in {\rm Forb}^*(bull)$, let $h_0-h_1-...-h_{k-1}-h_0$ (with $k \geq 5$ and the indices in $\mathbb{Z}_k$) be an odd hole in $G$, and set $H = \{h_0,h_1,...,h_{k-1}\}$. Assume that $G$ contains an anti-center for $H$, and that $G$ does not contain a proper homogeneous set. Then there exists an index $i \in \mathbb{Z}_k$ such that $V_G = H \cup A_H \cup L_H^i \cup S_H^i \cup \bigcup_{j \in \mathbb{Z}_k} (N_H^j \cup C_H^j)$, where $L_H^i$ is non-empty, $L_H^i$ is anti-complete to $S_H^i$, and if $k \geq 7$, then $S_H^i$ and $\bigcup_{j \in \mathbb{Z}_k} N_H^j$ are empty. 
\end{theorem}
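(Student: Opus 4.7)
The plan is to assemble the structural information already provided by \ref{appropriate}, \ref{center}, and \ref{anti-center leaf}, and then rule out the few remaining configurations by locating an induced bull or bull$^*$.

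First, by \ref{appropriate} each vertex $v \in V_G \smallsetminus H$ is appropriate for $H$, so $v$ is a center, an anti-center, a leaf, an adjacent clone, a non-adjacent clone (only possible when $k = 5$), or a star (only possible when $k = 5$). The hypothesis that $G$ contains an anti-center but no proper homogeneous set, combined with \ref{center}, rules out centers for $H$. And \ref{anti-center leaf} supplies the unique index $i \in \mathbb{Z}_k$ with $L_H^i \neq \emptyset$, and confirms $L_H^j = \emptyset$ for $j \neq i$. Putting these facts together gives $V_G \subseteq H \cup A_H \cup L_H^i \cup \bigcup_{j \in \mathbb{Z}_k}(N_H^j \cup C_H^j \cup S_H^j)$, with $L_H^i$ non-empty.

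When $k \geq 7$, \ref{appropriate} already yields $\bigcup_j S_H^j = \emptyset$ and $\bigcup_j N_H^j = \emptyset$ (stars and non-adjacent clones appear among the appropriate types only when $k = 5$), so the desired description of $V_G$ and the ``if $k \geq 7$'' clause both follow immediately. Only the case $k = 5$ requires further work: I must prove that every star lies at $h_i$ (i.e.\ $S_H^j = \emptyset$ for $j \neq i$) and that no star at $h_i$ has a neighbor in $L_H^i$.

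The plan for $k = 5$ is a small-case analysis. Fix a star $s \in S_H^j$ and a leaf $l \in L_H^i$. Since $s$ is complete to $H \smallsetminus \{h_j\}$ and non-adjacent to $h_j$, for any two adjacent hole vertices $h_m, h_{m+1}$ with $m, m+1 \neq j$ the triple $\{s, h_m, h_{m+1}\}$ induces a triangle, and such triangles are the only source of triangles involving $s$. In each sub-case ($j \neq i$ with $l \sim s$, $j \neq i$ with $l \not\sim s$, and $j = i$ with $l \sim s$), the aim is to display an induced bull or bull$^*$: the triangle $\{s, h_m, h_{m+1}\}$ plays the role of the bull's triangle, with $l$ (or a vertex reached from $l$) acting as a pendant at $s$, and a carefully chosen hole vertex $h_n$ or a subdivided pendant routed through the anti-center as the other pendant at $h_m$ or $h_{m+1}$.

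The main obstacle is that the near-completeness of $s$ to $H$ repeatedly creates an unwanted chord from $s$ to the naive candidate pendant at $h_m$ or $h_{m+1}$; the index $n$ must therefore be chosen to lie on the ``$h_j$-side'' of the triangle, where the only hole vertex non-adjacent to $s$ is available. When no direct hole vertex works as the second pendant, I use \ref{anti-center leaf}(ii) to pick a leaf $l^{*} \in L_H^i$ with a neighbour $a \in A_H$, and extend the second pendant into a path of the form $h_m \dash h_i \dash l^{*} \dash a$ or similar; the fact that $a$ is anti-complete to $V_G \smallsetminus (A_H \cup L_H^i)$ (by \ref{anti-center leaf}(iii)) ensures this extension creates no new chords to the triangle vertices or to the first pendant. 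Once the correct configuration is produced, the resulting induced bull$^*$ contradicts $G \in {\rm Forb}^{*}(bull)$, completing the proof.
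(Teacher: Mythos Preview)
Your overall strategy matches the paper's: the $k \geq 7$ case is immediate from \ref{appropriate}, \ref{center}, and \ref{anti-center leaf}, and for $k = 5$ one eliminates stars at $h_j$ for $j \neq i$ and then shows that $L_H^i$ is anti-complete to $S_H^i$ by exhibiting forbidden configurations. For the $j \neq i$ sub-cases your plan works; in fact the paper never needs the anti-center there, finding in each of the four sub-cases a five-vertex bull on $l$, $s$, and three hole vertices.

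There is, however, a genuine gap in your treatment of the case $j = i$ with $l$ adjacent to $s$. Your proposed second pendant is a path $h_m - h_i - l^{*} - a$, attached to a triangle $\{s, h_m, h_{m+1}\}$ with $l$ serving as the first pendant at $s$. But $l$ is a leaf at $h_i$, so $l h_i$ is an edge: this is an unavoidable chord between your first pendant and your second pendant path, and the resulting subgraph is never a bull$^{*}$. Nothing in your setup controls the adjacencies $l^{*} l$ or $l^{*} s$ either, so further chords may appear. The obstruction is intrinsic: when $j = i$ the only hole vertex non-adjacent to $s$ is $h_i$, which is exactly the vertex every leaf of $L_H^i$ is attached to, so no pendant routed through $h_i$ can avoid $l$.

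The paper handles this case by abandoning the triangle $\{s, h_m, h_{m+1}\}$ entirely. It first proves an auxiliary claim: every vertex of $L_H^i$ is anti-complete to at least one of $A_H$ and $S_H^i$ (otherwise a six-vertex bull$^{*}$ arises on $l, s, a, h_{i-1}, h_i, h_{i+2}$). Given adjacent $l \in L_H^i$ and $s \in S_H^i$, the claim forces $l$ to be anti-complete to $A_H$; now pick (via \ref{anti-center leaf}(ii)) a second leaf $l' \in L_H^i$ adjacent to some $a \in A_H$, so that by the claim $l' s$ is a non-edge. One finishes by casing on $l l'$: if it is an edge, $G[l, l', a, s, h_i]$ is a bull with triangle $\{l, l', h_i\}$; if not, $G[l, l', s, h_{i-1}, h_i, h_{i+2}]$ is a bull$^{*}$.
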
 
\begin{proof} 
If $k \geq 7$, then the result is immediate from \ref{center}, \ref{appropriate}, and \ref{anti-center leaf}. So assume that $k = 5$. By \ref{center}, \ref{appropriate}, and \ref{anti-center leaf}, we know that $V_G = H \cup A_H \cup L_H^i \cup \bigcup_{j \in \mathbb{Z}_5} (S_H^j \cup N_H^j \cup C_H^j)$, with $L_H^i \neq \emptyset$, for some $i \in \mathbb{Z}_5$. We need to show that $S_H^j = \emptyset$ for all $j \in \mathbb{Z}_5 \smallsetminus \{i\}$, and that $L_H^i$ is anti-complete to $S_H^i$. 
\\
\\
We first show that $S_H^j = \emptyset$ for all $j \in \mathbb{Z}_5 \smallsetminus \{i\}$. By symmetry, it suffices to show that $S_H^{i+1}$ and $S_H^{i+2}$ are empty. Fix some $l_i \in L_H^i$. Suppose first that $S_H^{i+1} \neq \emptyset$, and fix $s_{i+1} \in S_H^{i+1}$. But then if $s_{i+1}l_i$ is an edge, then $G[l_i,s_{i+1},h_{i-2},h_i,h_{i+1}]$ is a bull; and if $s_{i+1}l_i$ is a non-edge, then $G[l_i,s_{i+1},h_{i-1},h_i,h_{i+2}]$ is a bull. Thus, $S_H^{i+1} = \emptyset$. Suppose now that $S_H^{i+2} \neq \emptyset$, and fix $s_{i+2} \in S_H^{i+2}$. But then if $s_{i+2}l_i$ is an edge, then $G[s_{i+2},l_i,h_{i-2},h_{i-1},h_{i+2}]$ is a bull; and if $s_{i+2}l_i$ is a non-edge, then $G[s_{i+2},l_i,h_i,h_{i+1},h_{i+2}]$ is a bull. Thus, $S_H^{i+2} = \emptyset$. 
\\
\\
It remains to show that $L_H^i$ is anti-complete to $S_H^i$. Suppose otherwise. By \ref{anti-center leaf}, $A_H$ is not anti-complete to $L_H^i$, and $A_H$ is anti-complete to $H \cup S_H^i$. We first note that every vertex in $L_H^i$ is anti-complete to at least one of $A_H$ and $S_H^i$, for otherwise, we fix some $l_i \in L_H^i$, $s_i \in S_H^i$, and $a \in A_H$ such that $l_i$ is adjacent to both $s_i$ and $a$, and we observe that $G[l_i,s_i,a,h_{i-1},h_i,h_{i+2}]$ is a bull$^*$. Now, fix some adjacent $l_i \in L_H^i$ and $s_i \in S_H^i$. By what we just showed, $l_i$ is anti-complete to $A_H$. Since $A_H$ is not anti-complete to $L_H^i$, there exist adjacent $a \in A_H$ and $l_i' \in L_H^i \smallsetminus \{l_i\}$. Since $l_i' \in L_H^i$ has a neighbor in $A_H$, we know that $l_i'$ is anti-complete to $S_H^i$, and in particular, that $l_i's_i$ is a non-edge. But now if $l_il_i'$ is an edge, then $G[l_i,l_i',a,s_i,h_i]$ is a bull; and if $l_il_i'$ is a non-edge, then $G[l_i,l_i',s_i,h_{i-1},h_i,h_{i+2}]$ is a bull$^*$. This completes the argument. 
\end{proof} 
\begin{theorem} \label{no clone} Let $G \in {\rm Forb}^*(bull)$, let $h_0-h_1-...-h_{k-1}-h_0$ (with $k \geq 5$ and the indices in $\mathbb{Z}_k$) be an odd hole in $G$, and set $H = \{h_0,h_1,...,h_{k-1}\}$. Assume that $G$ contains an anti-center for $H$, and that $G$ does not contain a proper homogeneous set. Then there exists an index $i \in \mathbb{Z}_k$ such that $V_G = H \cup A_H \cup L_H^i \cup S_H^i$, where $L_H^i$ is non-empty, $L_H^i$ is anti-complete to $S_H^i$, and if $k \geq 7$, then $S_H^i$ is empty. 
\end{theorem}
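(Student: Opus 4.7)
My plan is to show that if $G$ contains a clone (adjacent, or non-adjacent when $k=5$), then $G$ contains either an induced bull$^*$ or a proper homogeneous set, both contradicting the hypotheses. Using \ref{bull cleaning} I fix the index $i \in \mathbb{Z}_k$ and the associated partition of $V_G$, and using \ref{anti-center leaf}(ii) I fix $l \in L_H^i$ and $a \in A_H$ with $la$ an edge. As observed in the proof of \ref{anti-center leaf}, every clone is anti-complete to $A_H$. It suffices to show that $C_H^j = \emptyset$ for every $j$, and (when $k=5$) also $N_H^j = \emptyset$; I focus on adjacent clones, the non-adjacent-clone case being parallel.

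Suppose for contradiction that $c \in C_H^j$. I split into four cases according to whether $i = j$ and whether $lc$ is an edge. In two of the cases a small induced bull$^*$ is produced directly. When $i = j$ and $lc$ is a non-edge, $G[a, l, h_j, c, h_{j+1}, h_{j+2}]$ is an induced bull$^*$: triangle $h_j c h_{j+1}$, subdivided pendant $h_j, l, a$ at $h_j$, and pendant edge $h_{j+1}h_{j+2}$ at $h_{j+1}$. When $i \neq j$ and $lc$ is an edge, at least one of $G[c, h_j, h_{j+1}, h_{j+2}, l, a]$ and $G[c, h_{j-2}, h_{j-1}, h_j, l, a]$ is an induced bull$^*$ (triangle $c h_j h_{j+1}$ or $c h_{j-1} h_j$, pendant path $c, l, a$ attached at $c$, and a pendant edge along $H$); which one works depends on the position of $i$, and since $k \geq 5$, a short verification shows that at least one always does whenever $i \neq j$.

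In the remaining two cases no such small bull$^*$ is available, so I argue instead that $\{c, h_j\}$ is a proper homogeneous set. As a preliminary step I rule out mixed leaves: a leaf $l' \in L_H^i$ mixed on $\{c, h_j\}$ yields an induced bull in a subgraph such as $G[l', c, h_j, h_{j+1}, h_{j+2}]$ (triangle $c h_j h_{j+1}$, pendants $l'$ at $c$ and $h_{j+2}$ at $h_{j+1}$), with positional variants handling the case that $l'$ is adjacent to some hole vertex in the chosen subgraph. Next I check the other vertex types. For $k \geq 7$ the only remaining type is $C_H^l$ with $l \neq j$; for $k = 5$ one must additionally exclude mixed vertices in $S_H^i$ and in each $N_H^l$. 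Each mixed vertex yields a tailored bull$^*$. For example, a star $s \in S_H^j$ adjacent to $c$ (in the $i = j$ subcase with $k = 5$) gives the induced bull$^*$ $G[s, c, l, h_j, h_{j+2}, a]$ with triangle $l c h_j$, pendant path $c, s, h_{j+2}$ at $c$, and pendant edge $la$ at $l$. Incompatible clones $c' \in C_H^l$ are similarly eliminated by bull or bull$^*$ constructions involving $c, c', l$, and suitable vertices of $H$.

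The main obstacle is the case $i = j$ with $lc$ an edge: here $c$ on its own does not supply a small bull$^*$, so the homogeneity of $\{c, h_j\}$ must be verified by a type-by-type enumeration of potential mixed vertices, each giving rise to a tailored bull or bull$^*$. The non-adjacent-clone analysis for $k = 5$ follows the same four-case template.
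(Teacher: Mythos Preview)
Your plan has a genuine gap. In the ``hard'' cases you aim to prove that the pair $\{c,h_j\}$ is a proper homogeneous set by a type-by-type check of possible mixed vertices. But you never consider a second clone $c'\in C_H^j\cup N_H^j$ at the \emph{same} vertex $h_j$. If $c'\in C_H^j$ and $cc'$ is a non-edge, then $c'$ is adjacent to $h_j$ and non-adjacent to $c$, so $c'$ is mixed on $\{c,h_j\}$; similarly $c'\in N_H^j$ with $cc'$ an edge is mixed. Neither situation yields a small bull or bull$^*$: for instance, a $C_5$ together with two non-adjacent adjacent-clones at the same vertex is easily checked to be bull$^*$-free. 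So the pair $\{c,h_j\}$ need not be homogeneous at all, and your enumeration cannot close this case.

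The paper sidesteps this by proving a different (weaker, hence provable) homogeneity statement: the whole set $\{h_j\}\cup N_H^j\cup C_H^j$ is homogeneous. Other clones at $h_j$ then lie \emph{inside} the candidate set and need not be tested. The mechanism is also different from your case split on $i$ versus $j$ and on $lc$: given $v$ mixed on $\{h_j\}\cup N_H^j\cup C_H^j$, choose $c_j,c_j'$ in that set with $vc_j$ an edge and $vc_j'$ a non-edge, and form two new odd holes $\hat H=(H\smallsetminus\{h_j\})\cup\{c_j\}$ and $\hat H'=(H\smallsetminus\{h_j\})\cup\{c_j'\}$. Since any anti-center for $H$ is also an anti-center for $\hat H$ and $\hat H'$, \ref{bull cleaning} applies to both; but $v$ has exactly one more neighbour in $\hat H$ than in $\hat H'$, which forces $v$ into one of four tightly constrained type-pairs (leaf/anti-center, or for $k=5$ non-adjacent clone/leaf, adjacent clone/non-adjacent clone, star/adjacent clone), each of which is then dispatched. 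This ``swap $h_j$ for a clone'' idea is the missing ingredient; your leaf-and-anti-center based casework does not reach it.
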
 
\begin{proof} 
By \ref{bull cleaning}, we just need to show that $N_H^j \cup C_H^j = \emptyset$ for all $j \in \mathbb{Z}_k$. It suffices to show that for all $j \in \mathbb{Z}_k$, $\{h_j\} \cup N_H^j \cup C_H^j$ is a homogeneous set in $G$, for then the fact that $G$ contains no proper homogeneous set will imply that $\{h_j\} \cup N_H^j \cup C_H^j$ is a singleton, and therefore, that $N_H^j \cup C_H^j = \emptyset$. 
\\
\\
Fix $j \in \mathbb{Z}_k$, and suppose that $\{h_j\} \cup N_H^j \cup C_H^j$ is not a homogeneous set in $G$. Fix some $v \in V_G \smallsetminus (\{h_j\} \cup N_H^j \cup C_H^j)$ such that $v$ is mixed on $\{h_j\} \cup N_H^j \cup C_H^j$. Clearly, $v \notin H$. Fix some $c_j,c_j' \in \{h_j\} \cup N_H^j \cup C_H^j$ such that $v$ is adjacent to $c_j$ and non-adjacent to $c_j'$. Set $\hat{H} = (H \smallsetminus \{h_j\}) \cup \{c_j\}$ and $\hat{H}' = (H \smallsetminus \{h_j\}) \cup \{c_j'\}$. Then $G[\hat{H}]$ and $G[\hat{H}']$ are both odd holes of length $k$. Next, by \ref{anti-center leaf}, $A_H$ is anti-complete to $\{c_j,c_j'\}$, and so since $A_H$ is non-empty, $G$ contains an anti-center for both $\hat{H}$ and $\hat{H}'$; thus, \ref{bull cleaning} applies to both $\hat{H}$ and $\hat{H}'$. This, together with the fact that $v$ has exactly one more neighbor in $\hat{H}$ than in $\hat{H}'$, implies that either: 
\begin{itemize} 
\item[(a)] $v$ is a leaf for $\hat{H}$ and an anti-center for $\hat{H}'$; or 
\item[(b)] $k = 5$ and one of the following holds: 
\begin{itemize} 
\item[(b1)] $v$ is a non-adjacent clone for $\hat{H}$ and a leaf for $\hat{H}'$; 
\item[(b2)] $v$ is an adjacent clone for $\hat{H}$ and a non-adjacent clone for $\hat{H}'$; 
\item[(b3)] $v$ is a star for $\hat{H}$ and an adjacent clone for $\hat{H}'$. 
\end{itemize} 
\end{itemize} 
Suppose that (a) holds. Since $v$ is adjacent to $c_j$, $v$ is a leaf for $\hat{H}$ at $c_j$. But now if $c_jc_j'$ is an edge, then $G[v,c_j,c_j',h_{j+1},h_{j+2}]$ is a bull; and if $c_jc_j'$ is a non-edge, then $G[v,c_j,c_j',h_{j-1},h_{j+1},h_{j+2}]$ is a bull$^*$. From now on, we assume that (b) holds, and so $k = 5$. 
\\
\\
Suppose first that (b1) holds. Since $v$ is a non-adjacent clone for $\hat{H}$ and is adjacent to $c_j$, we know that $v$ is a non-adjacent clone for $\hat{H}$ at either $h_{j-1}$ or at $h_{j+1}$; by symmetry, we may assume that $v$ is a non-adjacent clone for $\hat{H}$ at $h_{j+1}$. But now if $c_jc_j'$ is an edge, then $G[v,c_j,c_j',h_{j-2},h_{j-1}]$ is a bull; and if $c_jc_j'$ is a non-edge, then $G[v,c_j,c_j',h_{j-2},h_{j-1},h_{j+1}]$ is a bull$^*$. 
\\
\\
Suppose next that (b2) holds. Since $v$ is a clone for both $\hat{H}$ and $\hat{H}'$, and since $v$ is adjacent to $c_j$ and non-adjacent to $c_j'$, it is easy to see that $v$ is an adjacent clone for $\hat{H}$ at $c_j$ and a non-adjacent clone for $\hat{H}'$ at $c_j'$. But now $v$ is a clone for $H$ at $h_j$, contrary to the fact that $v \in V_G \smallsetminus (\{h_j\} \cup N_H^j \cup C_H^j)$. 
\\
\\
Suppose finally that (b3) holds. Since $v$ is adjacent to $c_j$ and non-adjacent to $c_j'$, it is easy to see that $v$ is a star for $\hat{H}$ at either $h_{j-1}$ or $h_{j+1}$; by symmetry, we may assume that $v$ is a star for $\hat{H}$ at $h_{j+1}$. Since \ref{bull cleaning} applies to $\hat{H}$, it follows that $G$ contains a leaf $l_{j+1}$ for $\hat{H}$ at $h_{j+1}$, and that $l_{j+1}$ is non-adjacent to $v$. Since $l_{j+1}$ is appropriate for $\hat{H}'$, it is non-adjacent to $c_j'$. But now if $c_jc_j'$ is an edge, then $G[v,c_j,c_j',l_{j+1},h_{j+1}]$ is a bull; and if $c_jc_j'$ is a non-edge, then $G[v,c_j,c_j',h_{j-1},h_{j+2}]$ is a bull. This completes the argument. 
\end{proof} 
\begin{theorem} \label{hole} Let $G \in {\rm Forb}^*(bull)$, let $h_0-h_1-...-h_{k-1}-h_0$ (with $k \geq 5$ and the indices in $\mathbb{Z}_k$) be an odd hole in $G$, and set $H = \{h_0,h_1,...,h_{k-1}\}$. Assume that $G$ contains an anti-center for $H$. Then $G$ contains a proper homogeneous set or a cut-vertex. 
\end{theorem}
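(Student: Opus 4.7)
The plan is to assume that $G$ contains no proper homogeneous set (otherwise the first alternative of the statement holds) and then produce a cut-vertex. First I would observe that $G$ must be connected: otherwise, since the hole $h_0-h_1-\cdots-h_{k-1}-h_0$ lies in one component $C$ with $|C| \geq k \geq 5$, the set $C$ would be a proper homogeneous set in $G$ (no vertex outside $C$ has any neighbor in $C$, so none is mixed on $C$), contradicting our assumption.

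Next I would apply \ref{no clone} to obtain an index $i \in \mathbb{Z}_k$ with
\[V_G = H \cup A_H \cup L_H^i \cup S_H^i,\]
where $L_H^i \neq \emptyset$ and $L_H^i$ is anti-complete to $S_H^i$. My claim is that $h_i$ is a cut-vertex. To see it, consider the partition of $V_G \smallsetminus \{h_i\}$ into $X = (H \smallsetminus \{h_i\}) \cup S_H^i$ and $Y = L_H^i \cup A_H$. Both parts are nonempty: $X$ contains the path $h_{i+1}-\cdots-h_{i-1}$, and $Y$ contains $L_H^i$.

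The remaining work is a handful of routine checks that there are no edges between $X$ and $Y$: $L_H^i$ is anti-complete to $H \smallsetminus \{h_i\}$ by the very definition of a leaf at $h_i$; $L_H^i$ is anti-complete to $S_H^i$ by \ref{no clone}; $A_H$ is anti-complete to $H$ by the definition of an anti-center; and $A_H$ is anti-complete to $S_H^i$ because by \ref{anti-center leaf}(iii), $A_H$ is anti-complete to $V_G \smallsetminus (A_H \cup L_H^i) \supseteq S_H^i$. Together these imply that $G \smallsetminus h_i$ is disconnected, while $G$ is connected, so $h_i$ is the desired cut-vertex. I do not foresee any real obstacle here: the heavy structural work has already been done in \ref{anti-center leaf}, \ref{bull cleaning}, and \ref{no clone}, and the decomposition they yield is tight enough that the distinguished ``leaf vertex'' $h_i$ cleanly separates the anti-center/leaf side of $G$ from the hole/star side.
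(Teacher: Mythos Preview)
Your proposal is correct and follows essentially the same route as the paper: assume no proper homogeneous set, invoke \ref{no clone} to get the decomposition $V_G = H \cup A_H \cup L_H^i \cup S_H^i$, then use \ref{anti-center leaf} together with the definitions of leaf and anti-center to see that $A_H \cup L_H^i$ is anti-complete to $(H \smallsetminus \{h_i\}) \cup S_H^i$, so that $h_i$ is a cut-vertex. The only difference is that you make the connectedness of $G$ explicit, whereas the paper simply notes that $h_i$ has neighbors on both sides.
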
 
\begin{proof} 
We assume that $G$ does not contain a proper homogeneous set and show that it contains a cut-vertex. By \ref{no clone}, there exists an index $i \in \mathbb{Z}_k$ such that $V_G = H \cup A_H \cup L_H^i \cup S_H^i$ and $L_H^i$ is non-empty and anti-complete to $S_H^i$. Now, by \ref{anti-center leaf}, $A_H$ is anti-complete to $S_H^i$. Thus, $A_H \cup L_H^i$ is anti-complete to $(H \smallsetminus \{h_i\}) \cup S_H^i$. Since $V_G = H \cup A_H \cup L_H^i \cup S_H^i$, and since $h_i$ has neighbors both in $L_H^i$ and in $H \smallsetminus \{h_i\}$, it follows that $h_i$ is a cut-vertex of $G$. 
\end{proof} 
\noindent 
We now restate and prove \ref{decomposition bull}, the main result of this section. 
\begin{decomposition bull} Let $G \in {\rm Forb}^*(bull)$. Then either $G$ is basic, or it contains a proper homogeneous set or a cut-vertex. 
\end{decomposition bull} 
\begin{proof} 
Since an anti-hole of length five is also a hole of length five, the result is immediate from \ref{anti-hole} and \ref{hole}. 
\end{proof}

\section{A $\chi$-Bounding Function for ${\rm Forb}^*(bull)$} 
\label{sec:bullChiBound}

In this section, we use \ref{decomposition bull} to prove that the class ${\rm Forb}^*(bull)$ is $\chi$-bounded by the function $f(n) = n^2$. We begin with some definitions. Given graphs $G_1$ and $G_2$ with $V_{G_1} \cap V_{G_2} = \{u\}$, we say that a graph $G$ is obtained by {\em gluing $G_1$ and $G_2$ along $u$} provided that the following hold: 
\begin{itemize} 
\item $V_G = V_{G_1} \cup V_{G_2}$; 
\item for all $i \in \{1,2\}$, $G[V_{G_i}] = G_i$; 
\item $V_{G_1} \smallsetminus \{u\}$ is anti-complete to $V_{G_2} \smallsetminus \{u\}$ in $G$. 
\end{itemize} 
We observe that if a graph $G$ has a cut-vertex, then $G$ is obtained by gluing smaller graphs (i.e.\ graphs that have strictly fewer vertices than $G$) along a vertex. 
\\
\\
Given graphs $G_1$ and $G_2$ with disjoint vertex-sets, a vertex $u \in V_{G_1}$, and a graph $G$, we say that $G$ is obtained by {\em substituting} $G_2$ for $u$ in $G_1$ provided that the following hold:
\begin{itemize} 
\item $V_G = (V_{G_1} \smallsetminus \{u\}) \cup V_{G_2}$; 
\item $G[V_{G_1} \smallsetminus \{u\}] = G_1 \smallsetminus u$; 
\item $G[V_{G_2}] = G_2$; 
\item for all $v \in V_{G_1} \smallsetminus \{u\}$, if $v$ is adjacent (respectively: non-adjacent) to $u$ in $G_1$, then $v$ is complete (respectively: anti-complete) to $V_{G_2}$ in $G$. 
\end{itemize} 
Under these circumstances, we also say that $G$ is obtained by {\em substitution} from $G_1$ and $G_2$. We note that if a graph $G$ has a proper homogeneous set, then it is obtained by substitution from smaller graphs. 
\\
\\
We say that a graph $G$ is {\em perfect} if for every induced subgraph $H$ of $G$, $\chi(H) = \omega(H)$. We now state two results about perfect graphs that we will need in this section. 
\begin{theorem}[Chudnovsky, Robertson, Seymour, and Thomas \cite{SPGT}] \label{perfect} A graph $G$ is perfect if and only if it contains no odd holes and no odd anti-holes. 
\end{theorem}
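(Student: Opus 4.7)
The plan is to treat the two implications separately. For the ``only if'' direction, I would verify that neither an odd hole nor an odd anti-hole is itself perfect. The hole $C_{2k+1}$ satisfies $\omega = 2$ while $\chi = 3$, and the anti-hole $\overline{C_{2k+1}}$ satisfies $\alpha = \omega(C_{2k+1}) = 2$, which yields $\chi(\overline{C_{2k+1}}) \geq (2k+1)/2 > k = \omega(\overline{C_{2k+1}})$. Since perfection is preserved under taking induced subgraphs, no perfect graph can contain either configuration.

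The ``if'' direction — that every \emph{Berge graph} (a graph with no odd hole or odd anti-hole) is perfect — is the substantive content, and I would follow the broad strategy of Chudnovsky, Robertson, Seymour, and Thomas. The plan is to proceed by induction on $|V_G|$ and split the argument into two ingredients. The first is a structural decomposition theorem for Berge graphs: every Berge graph either belongs to one of a short list of ``basic'' classes (bipartite graphs, line graphs of bipartite graphs, doubled split graphs, and complements of these), or admits one of several decompositions ($2$-join, complement $2$-join, balanced skew partition, or homogeneous pair). The second is a verification that the basic classes consist of perfect graphs and that each listed decomposition preserves perfection under the inductive hypothesis.

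The second ingredient is the comparatively tractable one. The basic classes are perfect via K\"onig's theorem (for bipartite graphs and their line graphs) together with a direct argument for doubled split graphs, with complements handled by the perfect graph theorem of Lov\'asz. Preservation of perfection under a $2$-join requires a parity argument to match optimum colourings of the two sides along the join; homogeneous pairs reduce quickly to substitution; and for balanced skew partitions one must invoke the (highly non-trivial) fact that a Berge graph admitting a skew partition admits a balanced one, and then build an optimum colouring block by block.

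The main obstacle — and the step I do not expect to shorten meaningfully — is the structural decomposition theorem itself. The established approach is first to handle Berge graphs containing no ``prism,'' ``pyramid,'' or appropriate wheel, showing that these lie in the basic classes; then, when such a configuration is present, to trace induced paths emanating from it and carefully describe how external vertices attach, ultimately forcing one of the four named decompositions. This case analysis is what occupies the bulk of \cite{SPGT}, and I would anticipate that any fresh attempt at the theorem succeeds or fails on whether this analysis admits compression.
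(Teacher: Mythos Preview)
The paper does not prove this theorem at all; it is stated with attribution to \cite{SPGT} and invoked as a black box (in the proof of 4.3, to certify that $G[B\cup\{u\}]$ is perfect). There is therefore nothing in the paper to compare your proposal against: the authors' ``proof'' is simply a citation.

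As a sketch of the Chudnovsky--Robertson--Seymour--Thomas argument your outline is broadly accurate, but one point is off. The ``if'' direction in \cite{SPGT} does not proceed by showing that each decomposition \emph{preserves perfection} under an inductive hypothesis. Rather, one posits a minimum imperfect Berge graph and shows separately that (i) it is not basic and (ii) it admits none of the listed decompositions; the structural theorem then gives the contradiction. For balanced skew partitions in particular, step (ii) relies on known properties of minimal imperfect graphs (extending Chv\'atal's star-cutset lemma), not on a direct colouring-gluing argument of the kind you describe. Also, homogeneous pairs do not appear as a separate outcome in the final version of \cite{SPGT}; the decomposition outcomes there are $2$-join, $2$-join in the complement, and balanced skew partition.
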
 
\begin{theorem}[Lov\'asz \cite{Lovasz}] \label{replication} Let $G_1$ and $G_2$ be perfect graphs with disjoint vertex-sets, and let $u \in V_{G_1}$. Let $G$ be the graph obtained by substituting $G_2$ for $u$ in $G_1$. Then $G$ is perfect. 
\end{theorem}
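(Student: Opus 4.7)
The plan is to invoke the Strong Perfect Graph Theorem (\ref{perfect}): to show that $G$ is perfect, it suffices to verify that $G$ contains neither an odd hole nor an odd antihole. The key structural observation I would exploit is that $V_{G_2}$ is a homogeneous set in $G$, since every vertex of $V_{G_2}$ has the same neighborhood in $V_{G_1} \smallsetminus \{u\}$, namely $\Gamma_{G_1}(u)$. With this, the odd-hole case reduces to a short combinatorial analysis, and the odd-antihole case reduces to the odd-hole case by complementing: one checks directly that $\overline{G}$ is the substitution of $\overline{G_2}$ for $u$ in $\overline{G_1}$, and by \ref{perfect} applied to $G_1$ and $G_2$, neither $\overline{G_1}$ nor $\overline{G_2}$ contains an odd hole.

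For the odd-hole case, suppose for contradiction that $C$ is an odd hole in $G$. Set $V_1 = V_C \cap V_{G_2}$ and $V_2 = V_C \cap (V_{G_1} \smallsetminus \{u\})$. If $V_1 = \emptyset$, then $C$ lies in $G_1 \smallsetminus u$ and is an odd hole of $G_1$, contradicting its perfectness; the case $V_2 = \emptyset$ is symmetric and produces an odd hole of $G_2$. If $|V_1| = 1$, replacing the unique $V_1$-vertex by $u$ yields an induced cycle in $G_1$ of the same length as $C$ (induced-ness uses precisely the homogeneous-set property), a contradiction. So I may assume $|V_1| \geq 2$ and $V_2 \neq \emptyset$, and set $W = V_C \cap \Gamma_{G_1}(u) \subseteq V_2$. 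By homogeneity, for every $v \in V_1$ the set of $V_2$-neighbors of $v$ in $C$ equals $W$; since $v$ has exactly two neighbors in $C$, this forces $|W| \in \{0,1,2\}$.

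I would then eliminate each possibility for $|W|$. If $|W| = 0$, each vertex of $V_1$ has both its $C$-neighbors in $V_1$, so $V_1$ is a union of components of $C$; impossible since $C$ is connected and $V_2 \neq \emptyset$. If $|W| = 2$, say $W = \{a,b\}$, then every $v \in V_1$ has $\Gamma_C(v) = \{a,b\}$, so $|V_1| \leq 2$; combined with $|V_1| \geq 2$ this forces $C$ to be the $4$-cycle $a - v_1 - b - v_2 - a$, contradicting $|V_C| \geq 5$. If $|W| = 1$, say $W = \{w\}$, then every $v \in V_1$ is $C$-adjacent to $w$ and to exactly one other vertex, which must lie in $V_1$; this again forces $|V_1| = 2$, but now $\{w, v_1, v_2\}$ induces a triangle in $C$, contradicting that $C$ is an induced cycle of length $\geq 5$.

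The odd-antihole case then follows by the complementation reduction described above, and \ref{perfect} gives that $G$ is perfect. The main obstacle is the $|V_1| \geq 2$ subcase: one has to combine the homogeneity of $V_{G_2}$ with the cyclic structure of $C$ to derive the $|W| \in \{0,1,2\}$ trichotomy and dispose of each case. The remaining steps — the reductions for $|V_1| \leq 1$, the easy verification that $\overline{G}$ has the claimed substitution form, and the invocation of \ref{perfect} — are essentially routine.
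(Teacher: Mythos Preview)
The paper does not give its own proof of this statement; it is quoted as a classical result of Lov\'asz and used as a black box. So there is nothing in the paper to compare your argument against directly.

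Your argument is correct. The homogeneity of $V_{G_2}$ is exactly the right structural fact, the case analysis on $|V_1|$ and then on $|W|$ goes through as you describe, and the complementation step works because substitution is self-dual under complementation (so $\overline{G}$ is the substitution of $\overline{G_2}$ for $u$ in $\overline{G_1}$, and perfectness of $G_i$ via \ref{perfect} gives that $\overline{G_i}$ has no odd hole).

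One remark worth making, though: you are invoking the Strong Perfect Graph Theorem (\ref{perfect}) to prove the replication lemma, which reverses the historical and logical order by more than thirty years. Lov\'asz's original 1972 proof is elementary and self-contained: one shows directly, by induction on $|V_G|$, that every induced subgraph $H$ of $G$ satisfies $\chi(H)=\omega(H)$, using nothing beyond the definition of perfectness (the key step being that replicating a single vertex of a perfect graph preserves perfectness). Within the present paper your route is harmless, since \ref{perfect} is already available, but you are trading a short direct argument for an appeal to a very deep theorem.
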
 
\noindent 
We note that \ref{perfect} is called the strong perfect graph theorem, and \ref{replication} is called the replication lemma. 
\\
\\
In this paper, a {\em weighted graph} is a graph $G$ such that each vertex $v \in V_G$ is assigned a positive integer called its {\em weight} and denoted by $w_v$. The {\em weight} of a non-empty set $S \subseteq V_G$ is the sum of weights of the vertices in $S$. We denote by $W_G$ the weight of a clique of maximum weight in $G$. Given an induced subgraph $H$ of $G$, and a vertex $v \in V_G$, we say that $H$ {\em covers} $v$ provided that $v \in V_H$. We now prove a technical lemma, which we then use to prove the main result of this section. 
\begin{theorem} Let $G \in {\rm Forb}^*(bull)$ be a weighted graph. Then there exists a family $\mathcal{P}_G$ of at most $W_G$ perfect induced subgraphs of $G$ such that for every vertex $v \in V_G$, at least $w_v$ members of $\mathcal{P}_G$ cover $v$. 
\end{theorem}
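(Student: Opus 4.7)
The plan is to proceed by induction on $|V_G|$, with the trivial base case $|V_G|\le 1$ handled by taking $w_v$ copies of the one-vertex graph. For the inductive step I apply \ref{decomposition bull}, which yields three cases: $G$ has a cut-vertex, $G$ has a proper homogeneous set, or $G$ is basic. The first two reduce $G$ to smaller bull$^*$-free graphs to which induction applies; the main obstacle will be the basic case.

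In the cut-vertex case, write $G$ as the gluing of $G_1$ and $G_2$ along $u$, so that $W_G = \max(W_{G_1}, W_{G_2})$. Apply induction to each $G_i$ with the inherited weights to obtain families $\mathcal{P}_i$ of size at most $W_{G_i}$. By replacing any excess $u$-containing subgraph $H \in \mathcal{P}_i$ by the still-perfect subgraph $H \smallsetminus u$, I may arrange that exactly $w_u$ members of each $\mathcal{P}_i$ contain $u$. Pairing these $u$-containing members across the two families yields gluings of two perfect graphs along a cut-vertex (which are perfect), and pairing the remaining members as disjoint unions is also perfect. The resulting family has size $\max(|\mathcal{P}_1|, |\mathcal{P}_2|) \le W_G$, with coverage immediate.

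In the homogeneous-set case, write $G$ as the substitution of $G_1 = G[S]$ for a new vertex $u^*$ in a smaller graph $G_2$. Apply induction to $G_1$ with inherited weights, and to $G_2$ with the modified weight $w^2_{u^*} := W_{G_1}$; a direct check of maximum-weight cliques gives $W_{G_2}^{\mathrm{new}} = W_G$. The resulting $\mathcal{P}_2$ has size at most $W_G$, at least $W_{G_1} \ge |\mathcal{P}_1|$ of whose members contain $u^*$. Substitute a member of $\mathcal{P}_1$ into each $u^*$-containing member of $\mathcal{P}_2$, reusing as necessary so that every member of $\mathcal{P}_1$ is used at least once; by the replication lemma (\ref{replication}), each substitution is perfect. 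Keeping the non-$u^*$ members of $\mathcal{P}_2$ unchanged, the final family has size $|\mathcal{P}_2| \le W_G$ and coverage is routine.

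The basic case---where $G$ has no odd hole and no odd anti-hole with an anti-center, but need not itself be perfect (for instance $C_5$)---is the main obstacle. Since the earlier cases otherwise apply, I may assume $G$ additionally has no cut-vertex and no proper homogeneous set, at which point \ref{appropriate} and \ref{no clone} severely restrict how vertices outside any odd hole of $G$ attach to it. My plan is to exploit these restrictions to construct $\mathcal{P}_G$ directly. In the sub-case where $G$ contains neither an odd hole nor an odd anti-hole, the strong perfect graph theorem \ref{perfect} gives that $G$ itself is perfect and $\mathcal{P}_G$ can be taken as $W_G$ copies of $G$. Otherwise, the structural lemmas confine external vertices to centers, leaves, and specific kinds of clones, and I would use this to produce perfect induced subgraphs of $G$---typically of the form $G \smallsetminus v$ for carefully chosen $v$---and combine them with nonnegative integer multiplicities $r_v$ satisfying $\sum_v r_v = W_G$ and $r_v \le W_G - w_v$, so that coverage of each $v$ is at least $W_G - r_v \ge w_v$. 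Verifying the structural claim that enough of these $G \smallsetminus v$ are perfect, together with the LP-feasibility of the multiplicities, is where I expect the bulk of the work to lie.
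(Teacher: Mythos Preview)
Your handling of the cut-vertex and homogeneous-set cases matches the paper's proof and is correct. The gap is in the basic case, where you have missed the key observation and instead pursue an approach that cannot work as stated.

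The point of the definition of ``basic'' is precisely this: if $G$ is basic and $u$ is any vertex, then the set $B$ of non-neighbours of $u$ induces a graph with no odd hole and no odd anti-hole (since $u$ would be an anti-center for any such hole or anti-hole in $G[B]$), and hence $G[B\cup\{u\}]$ is perfect by \ref{perfect}. The paper exploits this directly: choose $u$ of maximum weight, let $\mathcal{P}_B$ consist of $w_u$ copies of the perfect graph $G[B\cup\{u\}]$, and apply induction to $G[A]$ where $A=\Gamma_G(u)$. Since $u$ is complete to $A$ we have $w_u+W_{G[A]}\le W_G$, so $\mathcal{P}_G=\mathcal{P}_A\cup\mathcal{P}_B$ has at most $W_G$ members, and coverage is immediate (maximality of $w_u$ handles vertices in $B$).

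Your proposed route via \ref{appropriate} and \ref{no clone} does not go through. Lemma \ref{no clone} has as a hypothesis that $G$ contains an anti-center for the odd hole $H$; in the basic case no odd hole has an anti-center, so \ref{no clone} is vacuous here. More broadly, the structural lemmas of Section~\ref{sec:bullDecomp} were used to \emph{prove} \ref{decomposition bull}, and they all live in the non-basic setting; once you are in the basic case they give you nothing further. Your fallback plan of finding enough vertices $v$ with $G\smallsetminus v$ perfect is not supported by any lemma in the paper (and is false in general: take a graph consisting of many vertex-disjoint copies of $C_5$, which is basic since no vertex is anti-complete to any $C_5$ other than its own, yet deleting one vertex leaves all the other $C_5$'s intact). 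The correct move is the one-line observation above: in a basic graph, the closed non-neighbourhood of every vertex is perfect.
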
 
\begin{proof} 
We assume inductively that the claim holds for graphs with fewer than $|V_G|$ vertices. By \ref{decomposition bull}, we know that either $G$ is basic, or $G$ contains a proper homogeneous set, or $G$ contains a cut-vertex. 
\\
\\
Suppose first that $V_G$ is basic. Fix $u \in V_G$ such that $w_u$ is maximal. Let $A$ be the set of all neighbors of $u$ in $G$, and let $B$ be the set of all non-neighbors of $u$ in $G$. Since $G$ is basic, and $u$ is an anti-center for $B$, we know that $G[B]$ contains no odd holes and no odd anti-holes. Since $u$ is anti-complete to $B$, it follows that $G[B \cup \{u\}]$ contains no odd holes and no odd anti-holes, and so by the strong perfect graph theorem (\ref{perfect}), $G[B \cup \{u\}]$ is perfect. Let $\mathcal{P}_B$ be the family consisting of $w_u$ copies of the perfect graph $G[B \cup \{u\}]$. Note that by the maximality of $w_u$, every vertex $v \in B \cup \{u\}$ is covered by at least $w_v$ graphs in $\mathcal{P}_B$. If $A = \emptyset$ (so that $V_G = B \cup \{u\}$), then we set $\mathcal{P}_G = \mathcal{P}_B$, and we are done. So assume that $A \neq \emptyset$. Now by the induction hypothesis, there exists a family $\mathcal{P}_A$ of at most $W_{G[A]}$ perfect induced subgraphs of $G[A]$ such that each vertex $v \in A$ is covered by at least $w_v$ graphs in $\mathcal{P}_A$. Since $u$ is complete to $A$, we have that $w_u+W_{G[A]} \leq W_G$. Since the family $\mathcal{P}_B$ contains exactly $w_u$ graphs, it follows that the family $\mathcal{P}_G = \mathcal{P}_A \cup \mathcal{P}_B$ contains at most $W_G$ graphs, and by construction, every vertex $v \in V_G$ is covered by at least $w_v$ graphs in $\mathcal{P}_G$. 
\\
\\
Suppose now that $G$ contains a proper homogeneous set; let $S$ be a proper homogeneous set in $G$, let $A$ be the set of all vertices in $V_G$ that are complete to $S$, and let $B$ be the set of all vertices in $V_G$ that are anti-complete to $S$. Let $H$ be the graph whose vertex-set is $\{s\} \cup A \cup B$, with $H[A \cup B] = G[A \cup B]$, and $s$ complete to $A$ and anti-complete to $B$ in $H$. We turn $H$ into a weighted graph by letting the vertices in $A \cup B$ have the same weights in $H$ as they do in $G$, and setting $w_s = W_{G[S]}$. Clearly, $W_H = W_G$. Using the induction hypothesis, we let $\mathcal{P}_H$ be a family of at most $W_H = W_G$ perfect induced subgraphs of $H$ such that every vertex $v \in V_H$ is covered by at least $w_v$ graphs in $\mathcal{P}_H$, and we let $\mathcal{P}_{G[S]}$ be the family of at most $W_{G[S]} = w_s$ perfect inducted subgraphs of $G[S]$ such that every vertex $v \in S$ is covered by at least $w_v$ graphs in $\mathcal{P}_{G[S]}$. We may assume that the vertex $s$ is covered by exactly $w_s$ graphs in $\mathcal{P}_H$; let $P_1,...,P_{w_s}$ be the graphs in $\mathcal{P}_H$ covering $s$, and let $\mathcal{P}_H' = \mathcal{P}_H \smallsetminus \{P_1,...,P_{w_s}\}$. We may assume that $\mathcal{P}_{G[S]}$ contains exactly $W_{G[S]} = w_s$ graphs; say $\mathcal{P}_{G[S]} = \{Q_1,...,Q_{w_s}\}$. Now, for each $i \in \{1,...,w_s\}$, let $P_i'$ be the graph obtained by substituting the graph $Q_i$ for $s$ in $P_i$; by the replication lemma (\ref{replication}), the graph $P_i'$ is perfect for all $i \in \{1,...,w_s\}$. We then set $\mathcal{P}_G = \{P_1',...,P_{w_s}'\} \cup \mathcal{P}_H'$. By construction, $\mathcal{P}_G$ is a family of at most $W_G$ perfect induced subgraphs of $G$ such that for every vertex $v \in V_G$, at least $w_v$ members of $\mathcal{P}_G$ cover $v$. 
\\
\\
Suppose finally that $G$ contains a cut-vertex. Then there exist $u \in V_G$ and $C_1,C_2 \subseteq V_G \smallsetminus \{u\}$ such that $V_G = \{u\} \cup C_1 \cup C_2$, where $C_1$ and $C_2$ are non-empty, disjoint, and anti-complete to each other. For $i \in \{1,2\}$, let $G_i = G[C_i \cup \{u\}]$. (Note that $G$ is obtained by gluing $G_1$ and $G_2$ along $u$.) Using the induction hypothesis, for each $i \in \{1,2\}$, we get a family $\mathcal{P}_{G_i}$ of at most $W_{G_i}$ perfect induced subgraphs of $G_i$ such that each vertex $v \in V_{G_i}$ is covered by at least $w_v$ graphs in $\mathcal{P}_{G_i}$. We may assume that for all $i \in \{1,2\}$, $\mathcal{P}_{G_i}$ contains exactly $W_{G_i}$ graphs, and that $u_i$ is covered by exactly $w_{u_i}$ graphs in $\mathcal{P}_{G_i}$. By symmetry, we may assume that $W_{G_1} \leq W_{G_2}$. For each $i \in \{1,2\}$, let $P_1^i,...,P_{w_u}^i$ be the graphs in $\mathcal{P}_{G_i}$ covering $u$, let $P_{w_u+1}^i,...,P_{W_{G_1}}^i$ be $W_{G_1}-w_u$ graphs in $\mathcal{P}_{G_i}$ that do not cover $u$, and let $P_{W_{G_1}+1}^2,...,P_{W_{G_2}}^2$ be the remaining $W_{G_2}-W_{G_1}$ graphs in $\mathcal{P}_{G_2}$. Now, for all $j \in \{1,...,w_u\}$, let $P_j$ be the graph obtained by gluing $P_j^1$ and $P_j^2$ along $u$; for all $j \in \{w_u+1,....,W_{G_1}\}$, let $P_j$ be the disjoint union of $P_j^1$ and $P_j^2$; and for all $j \in \{W_{G_1}+1,...,W_{G_2}\}$, let $P_j = P_j^2$. It is easy to see that $P_j$ is perfect for all $j \in \{1,...,W_{G_2}\}$. Now set $\mathcal{P}_G = \{P_1,...,P_{W_{G_2}}\}$. Since $W_G = \max\{W_{G_1},W_{G_2}\} = W_{G_2}$, $\mathcal{P}_G$ is a family of at most $W_G$ perfect induced subgraphs of $G$ such that for every vertex $v \in V_G$, at least $w_v$ members of $\mathcal{P}_G$ cover $v$. 
\end{proof} 
\begin{theorem} \label{quadratic} The class ${\rm Forb}^*(bull)$ is $\chi$-bounded by the function $f(n) = n^2$. 
\end{theorem}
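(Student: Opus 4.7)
The plan is to derive the chromatic bound directly from the preceding technical lemma by applying it to $G$ viewed as a weighted graph with all weights equal to $1$. Under this weighting, $W_G = \omega(G)$, so the lemma produces a family $\mathcal{P}_G = \{P_1,\dots,P_m\}$ of perfect induced subgraphs of $G$ with $m \leq \omega(G)$, such that every vertex of $G$ lies in at least one $P_i$. In particular, $\bigcup_{i=1}^{m} V_{P_i} = V_G$.

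Next, I would properly color each $P_i$. Since $P_i$ is perfect, $\chi(P_i) = \omega(P_i) \leq \omega(G)$, so each $P_i$ admits a proper coloring using a palette $C_i$ of size $\omega(G)$; I would choose the palettes $C_1,\dots,C_m$ to be pairwise disjoint. For each vertex $v \in V_G$, fix an index $i(v)$ such that $v \in V_{P_{i(v)}}$, and give $v$ the color it receives in the chosen proper coloring of $P_{i(v)}$.

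To verify that this is a proper coloring of $G$, consider any edge $uv \in E_G$. If $i(u) \neq i(v)$, then $u$ and $v$ are assigned colors from disjoint palettes, so they differ. If $i(u) = i(v) = i$, then both $u$ and $v$ lie in $V_{P_i}$, and since $P_i$ is an \emph{induced} subgraph of $G$, the edge $uv$ is present in $P_i$; hence the proper coloring of $P_i$ assigns $u$ and $v$ distinct colors. The total number of colors used is at most $\sum_{i=1}^m |C_i| = m \cdot \omega(G) \leq \omega(G) \cdot \omega(G) = \omega(G)^2$, giving $\chi(G) \leq f(\omega(G))$, as required.

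There is no real obstacle here: the entire difficulty is concentrated in the technical lemma above, which provides the covering family of perfect induced subgraphs. The only point to be careful about is that the $P_i$ must be induced (rather than arbitrary) subgraphs, so that an edge of $G$ whose endpoints both lie in some $P_i$ really is an edge of $P_i$ and is therefore handled by the proper coloring of $P_i$; the lemma guarantees exactly this.
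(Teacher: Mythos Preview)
Your proposal is correct and follows essentially the same approach as the paper: apply the technical lemma with unit weights to obtain a covering by at most $\omega(G)$ perfect induced subgraphs, color each with at most $\omega(G)$ colors using disjoint palettes, and combine. The paper phrases the combination step slightly differently (it first passes to a covering in which each vertex lies in exactly one $P_i$), whereas you fix a choice $i(v)$ per vertex; these are equivalent, and your explicit verification that the induced-subgraph property makes the combined coloring proper is a welcome bit of care.
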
 
\begin{proof} 
Let $G \in {\rm Forb}^*(bull)$. Using $4.3$, we obtain a family $\mathcal{P}$ of at most $\omega(G)$ perfect induced subgraphs of $G$ such that each vertex in $V_G$ is covered by at least one graph in $\mathcal{P}$. Clearly, we may assume that each vertex in $V_G$ is covered by exactly one graph in $\mathcal{P}$. Since the graphs in $\mathcal{P}$ are perfect, each graph $P \in \mathcal{P}$ can be colored with $\omega(P) \leq \omega(G)$ colors; we may assume that the sets of colors used on the graphs in $\mathcal{P}$ are pairwise disjoint. Now we take the union of the colorings of the graphs in $\mathcal{P}$ to obtain a coloring of $G$ that uses at most $\omega(G)^2$ colors. 
\end{proof} 

\section{Necklaces}
\label{sec:necklaces}

We begin with some definitions. Let $n$ be a non-negative integer, and let $m_0,...,m_n$ be positive integers. Let $H$ be a graph whose vertex-set is $\bigcup_{i=0}^n \{x_{i,0},x_{i,1},...,x_{i,m_i}\} \cup \{y_1,...,y_n\}$, with adjacency as follows: 
\begin{itemize} 
\item $x_{0,0}-...-x_{0,m_0}-x_{1,0}-...-x_{1,m_1}-...-x_{n,0}-...-x_{n,m_n}$ is a chordless path; 
\item $\{y_1,...,y_n\}$ is a stable set; 
\item for all $i \in \{1,...,n\}$, the vertex $y_i$ has exactly two neighbors in the set $\bigcup_{i=0}^n \{x_{i,0},x_{i,1},...,x_{i,m_i}\}$, namely $x_{i-1,m_{i-1}}$ and $x_{i,0}$. 
\end{itemize} 
Under these circumstances, we say that $H$ is an {\em $(m_0,...,m_n)$-necklace with base $x_{0,0}$ and hook $x_{n,m_n}$}, or simply that $H$ is an {\em $(m_0,...,m_n)$-necklace}. If $G$ is a subdivision of $H$, then we say that $G$ is an {\em $(m_0,...,m_n)$-necklace$^*$ with base $x_{0,0}$ and hook $x_{n,m_n}$}, or simply that $G$ is an {\em $(m_0,...,m_n)$-necklace$^*$}. To simplify notation, given a non-negative integer $n$ and a positive integer $m$, we often write ``$(m)_n$-necklace'' instead of ``$\underbrace{(m,...,m)}_{n+1}$-necklace,'' and ``$(m)_n$-necklace$^*$'' instead of ``$\underbrace{(m,...,m)}_{n+1}$-necklace$^*$.'' (We remark that a $(1)_1$-necklace is the bull, and that for all positive integers $m$, an $(m)_0$-necklace with base $x_0$ and hook $x_m$ is a chordless $m$-edge path between $x_0$ and $x_m$.) 
\\
\\
Our goal in this section is to prove that for all non-negative integers $n$ and positive integers $m_0,...,m_n$, the class ${\rm Forb}^*((m_0,...,m_n)-necklace)$ is $\chi$-bounded by an exponential function (see \ref{necklace} below). We observe that in order to prove \ref{necklace}, it suffices to consider only the $(m)_n$-necklaces. Indeed, if $m = \max\{m_0,...,m_n\}$, then an $(m)_n$-necklace is a subdivision of an $(m_0,...,m_n)$-necklace, and consequently, ${\rm Forb}^*((m_0,...,m_n)-necklace) \subseteq {\rm Forb}^*((m)_n-necklace)$. Thus, it suffices to show that ${\rm Forb}^*((m)_n-necklace)$ is $\chi$-bounded by an exponential function. 
\\
\\
We now need some more definitions. First, in this paper, the {\em local chromatic number} of a graph $G$, denoted by $\chi_l(G)$, is the number $\max_{v \in V_G} \chi(G[\Gamma_G(v)])$. Next, let $n$ be a non-negative and $m$ a positive integer. Let $G$ be a graph whose vertex-set is the disjoint union of non-empty sets $N$ and $X$, let $x_0$ and $x$ be distinct vertices in $N$, and assume that the adjacency in $G$ is as follows: 
\begin{itemize} 
\item $G[N]$ is an $(m)_n$-necklace$^*$ with base $x_0$ and hook $x$; 
\item $G[X]$ is connected; 
\item $N \smallsetminus \{x\}$ is anti-complete to $X$; 
\item $x$ has a neighbor in $X$. 
\end{itemize} 
Under these circumstances, we say that $(G,x_0,x)$ is an {\em $(m)_n$-alloy} or simply an {\em alloy}. The graph $G$ is referred to as the {\em base graph} of the alloy $(G,x_0,x)$, and the ordered pair $(N,X)$ is the {\em partition} of the alloy $(G,x_0,x)$. The {\em potential} of the alloy $(G,x_0,x)$ is the chromatic number of the graph $G[X]$. 
\\
\\
We now state the main technical lemma of this section. 
\begin{theorem} \label{alloy} Let $G$ be a connected graph, and let $x_0 \in V_G$. Let $n$ and $\beta$ be non-negative integers, and let $m$ and $\alpha$ be positive integers. Assume that $\chi_l(G) \leq \alpha$ and $\chi(G) > 2^{n+1}((m+3)\alpha+\beta)$. Then there exists an induced subgraph $H$ of $G$ and a vertex $x \in V_G$ such that $(H,x_0,x)$ is an $(m)_n$-alloy of potential greater than $\beta$. 
\end{theorem}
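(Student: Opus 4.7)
The plan is to prove Theorem \ref{alloy} by induction on $n$, exploiting the hypothesis $\chi_l(G)\leq\alpha$ as a ``chromatic loss per step'' of at most $\alpha$ when a high-chromatic connected set is restricted by removing a single vertex's neighborhood.

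For the base case $n=0$, I must produce a chordless induced path from $x_0$ of length at least $m$, ending at some $x$, together with a connected set $X$ of chromatic number $>\beta$ attached only at $x$. I build this greedily: initialize $X^{(0)}$ as a connected component of $G\smallsetminus x_0$ of maximum chromatic number (so $\chi(G[X^{(0)}])\geq\chi(G)-1$), and iteratively, given a chordless path $x_0-v_1-\cdots-v_k$ and a connected $X^{(k)}$ with $v_k$ adjacent to $X^{(k)}$ and $\{x_0,\ldots,v_{k-1}\}$ anti-complete to $X^{(k)}$, set $S:=\Gamma_G(v_k)\cap X^{(k)}$ (so $\chi(G[S])\leq\alpha$), choose a connected component $A$ of $G[X^{(k)}\smallsetminus S]$ of maximum chromatic number (so $\chi(G[A])\geq\chi(G[X^{(k)}])-\alpha$), pick any $v_{k+1}\in S$ with a neighbor in $A$ (which exists by connectivity of $G[X^{(k)}]$), and set $X^{(k+1)}:=A$. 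After $m$ iterations, $\chi(G[X^{(m)}])\geq\chi(G)-1-m\alpha>\beta$, using the hypothesis $\chi(G)>2((m+3)\alpha+\beta)$.

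For the inductive step $n\geq 1$, let $\beta':=(m+3)\alpha+2\beta$, so that $2^{n+1}((m+3)\alpha+\beta)=2^n((m+3)\alpha+\beta')$; the inductive hypothesis then furnishes an $(m)_{n-1}$-alloy $(H_0,x_0,x_0')$ with partition $(N_0,X_0)$ and $\chi(G[X_0])>(m+3)\alpha+2\beta$. I extend this by a single bead, working inside $G[X_0\cup\{x_0'\}]$, to produce an $(m)_n$-alloy of potential $>\beta$. The new bead comprises a chordless cycle $C$ through $x_0'$ with $V(C)\smallsetminus\{x_0'\}\subseteq X_0$ (a subdivided triangle joint carrying the new vertex $y_n$), a chordless path of length $\geq m$ from some cycle vertex $x^*\neq x_0'$ to a new hook $x$ with interior in $X_0\smallsetminus\Gamma_G(x_0')$, and a connected set $X\subseteq X_0\smallsetminus\Gamma_G(x_0')$ of chromatic number $>\beta$ with $x$ adjacent to $X$. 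Set $S:=\Gamma_G(x_0')\cap X_0$ and pick a maximum-$\chi$ connected component $A$ of $G[X_0\smallsetminus S]$, so $\chi(G[A])>(m+2)\alpha+2\beta$; by connectivity of $G[X_0]$, $A$ meets $S$. If $A$ has two distinct neighbors $s_1,s_2\in S$, a shortest $s_1$-$s_2$ path through $A$ together with $x_0'$ is a chordless cycle $C$, and I then greedily extend from $C$ (analogously to the base case, but ensuring each successive path vertex avoids the neighborhoods of the already built cycle vertices) to produce the length-$m$ path and the new $X$. If $A$ meets $S$ at only one vertex $s$, that $s$ is a cut separating $x_0'$ from $A$ in $G[X_0\cup\{x_0'\}]$; I then ``shift'' the alloy by absorbing $s$ into the last subpath of the necklace, obtaining a new $(m)_{n-1}$-alloy of potential at least $\chi(G[X_0])-1$ with hook $s$ and attached set $A$, and iterate the analysis. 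Since $|X_0|$ is finite and each shift strictly shrinks the attached set, the shifting process terminates.

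The main obstacle is the chromatic-number bookkeeping in the inductive step: the slack $(m+3)\alpha+\beta$ between the starting potential $(m+3)\alpha+2\beta$ and the desired final potential $\beta$ must simultaneously cover the cost of building the chordless cycle (roughly $3\alpha$ when $C$ is a triangle or short cycle, since each new cycle vertex contributes a neighborhood of chromatic number at most $\alpha$), the cost of the length-$m$ path (at most $m\alpha$), and the total cost of any shifts performed before a cycle-forming configuration is found. Balancing these costs carefully, and in particular controlling the shift phase so that it does not exhaust the chromatic slack before a suitable cycle appears, is the delicate part of the argument and is presumably the content of the companion lemma \ref{alloy induction case} cited earlier in the paper.
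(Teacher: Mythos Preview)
Your base case is correct and matches the paper's Lemma~\ref{alloy base case} essentially verbatim. The recursive arithmetic $\beta'=(m+3)\alpha+2\beta$ is also sound: it does reduce the hypothesis $\chi(G)>2^{n+1}((m+3)\alpha+\beta)$ to the shape required for the $(n-1)$-case.

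The gap you flag in the inductive step is genuine, and it is structural rather than mere bookkeeping. First, your claim that a shift costs only $1$ is wrong: the new attached set is $A$, a component of $G[X_0\smallsetminus\Gamma_G(x_0')]$, so the guaranteed lower bound is $\chi(G[A])\geq\chi(G[X_0])-\alpha$, not $\chi(G[X_0])-1$. Nothing in your setup bounds the number of shifts in terms of $m,\alpha,\beta$, so the shift phase can consume unbounded chromatic slack. Second, even once you find $s_1,s_2$, your shortest $s_1$--$s_2$ path through $A$ may be arbitrarily long, and every vertex on the resulting cycle must have its neighbourhood excised from $A$ before you can build the length-$m$ tail and the final $X'$; this too is unbounded. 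Your slack of $(m+3)\alpha+\beta$ cannot absorb either cost.

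The paper's Lemma~\ref{alloy induction case} avoids both problems by a genuinely different construction. It first runs a BFS from the hook $x$ inside $\{x\}\cup X_0$, producing layers $S_0,\dots,S_t$; the factor $2$ in the hypothesis pays for passing to the layer $S_t$ of largest chromatic number. Each earlier layer $S_i$ is then pruned to be inclusion-minimal subject to every vertex of $S_{i+1}$ having a neighbour in $S_i$. The new bead's cycle is built from \emph{monotonic} (layer-respecting) paths back toward $x$: because these lie in $S_0\cup\cdots\cup S_{t-1}$ (apart from at most two vertices of $S_{t-1}$ whose $S_t$-neighbourhoods are explicitly removed), they are automatically anti-complete to $S_t\supseteq X'$, so the cycle may be long at no chromatic cost. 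The remaining work takes place inside $S_{t-1}\cup S_t$ and uses only a bounded number of vertex-neighbourhood removals, so the $(m+3)\alpha$ budget suffices. Note finally that Lemma~\ref{alloy induction case} requires starting potential $>2((m+3)\alpha+\beta)$, strictly more than the $(m+3)\alpha+2\beta$ your recursion delivers; to invoke it you would need to reorganise the induction as the paper does, via the sequence $\beta_j=\beta+\bigl(\sum_{i=1}^{n-j}2^i\bigr)\bigl((m+3)\alpha+\beta\bigr)$, rather than substitute a single $\beta'$.
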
 
\noindent 
Since the base graph of an $(m)_n$-alloy contains an $(m)_n$-necklace$^*$ as an induced subgraph, \ref{alloy} easily implies the main result of this section (\ref{necklace}), as we now show. (We note that our proof of \ref{necklace} relies only on the special case of \ref{alloy} when $\beta = 0$.) 
\begin{theorem} \label{necklace} Let $n$ be a non-negative integer, let $m_0,...,m_n$ be positive integers, and let $m = \max\{m_0,...,m_n\}$. Then the class ${\rm Forb}^*((m_0,...,m_n)-necklace)$ is $\chi$-bounded by the exponential function $f(k) = (2^{n+1}(m+3))^{k-1}$. 
\end{theorem}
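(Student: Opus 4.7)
\noindent
The plan is to derive \ref{necklace} from the special case $\beta=0$ of \ref{alloy} by induction on the clique number. First, as already observed in the preamble to this section, an $(m)_n$-necklace is a subdivision of an $(m_0,\ldots,m_n)$-necklace when $m=\max\{m_0,\ldots,m_n\}$, so ${\rm Forb}^*((m_0,\ldots,m_n)-necklace)\subseteq {\rm Forb}^*((m)_n-necklace)$, and it suffices to prove that ${\rm Forb}^*((m)_n-necklace)$ is $\chi$-bounded by $f(k)=(2^{n+1}(m+3))^{k-1}$.
\\
\\
I would then induct on $k=\omega(G)$ for $G\in {\rm Forb}^*((m)_n-necklace)$. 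The base case $k=1$ is immediate since such a $G$ is edgeless and $f(1)=1$. For the inductive step with $k\geq 2$, I may assume $G$ is connected by passing to a component of maximum chromatic number (the class is hereditary), and fix an arbitrary vertex $x_0\in V_G$. For every $v\in V_G$, the subgraph $G[\Gamma_G(v)]$ also lies in ${\rm Forb}^*((m)_n-necklace)$ and has clique number at most $k-1$, so the inductive hypothesis gives $\chi(G[\Gamma_G(v)])\leq f(k-1)$, and hence $\chi_l(G)\leq f(k-1)=:\alpha$.
\\
\\
With $\beta=0$, suppose for contradiction that $\chi(G)>f(k)=2^{n+1}(m+3)\alpha=2^{n+1}((m+3)\alpha+\beta)$. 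Then \ref{alloy} produces an induced subgraph $H$ of $G$ and a vertex $x\in V_G$ such that $(H,x_0,x)$ is an $(m)_n$-alloy (of potential greater than $0$, which is automatic). By the definition of an alloy, the vertex set of $H$ partitions as $N\cup X$ with $H[N]$ an $(m)_n$-necklace$^*$; in particular $G$ contains an induced $(m)_n$-necklace$^*$, contradicting $G\in {\rm Forb}^*((m)_n-necklace)$. Therefore $\chi(G)\leq f(k)$, completing the induction.
\\
\\
The only nontrivial ingredient here is the technical lemma \ref{alloy}, which is the substance of this section; once it is available, the reduction to $(m)_n$-necklaces together with the standard induction on $\omega$ through the local chromatic number (using that neighborhoods of single vertices have strictly smaller clique number) makes the derivation of \ref{necklace} essentially mechanical. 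Accordingly, the main obstacle is not in the deduction sketched above but rather inside the proof of \ref{alloy} itself, namely the combinatorial construction that, in a graph whose chromatic number dwarfs its local chromatic number, extracts a long necklace-shaped induced subgraph ending in a highly chromatic ``attachment'' set.
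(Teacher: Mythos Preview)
Your proposal is correct and follows essentially the same route as the paper's proof: reduce to $(m)_n$-necklaces, induct on $\omega$ (the paper phrases this as a minimal counterexample, but the content is identical), bound $\chi_l(G)$ via the induction hypothesis applied to neighborhoods, and then invoke \ref{alloy} with $\beta=0$ to extract an induced $(m)_n$-necklace$^*$, a contradiction. The only cosmetic difference is that you spell out the base case $k=1$ explicitly, whereas the paper simply notes $k\geq 2$.
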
 
\begin{proof} 
Since an $(m)_n$-necklace is a subdivision of an $(m_0,...,m_n)$-necklace, we know that ${\rm Forb}^*((m_0,...,m_n)-necklace) \subseteq {\rm Forb}^*((m)_n-necklace)$, and so it suffices to show that ${\rm Forb}^*((m)_n-necklace)$ is $\chi$-bounded by the function $f$. Suppose that this is not the case, and let $k \in \mathbb{N}$ be minimal with the property that there exists a graph $G \in {\rm Forb}^*((m)_n-necklace)$ such that $\omega(G) = k$ and $\chi(G) > f(k)$. Clearly, $k \geq 2$. Furthermore, we may assume that $G$ is connected, for otherwise, instead of $G$, we consider a component of $G$ with maximum chromatic number. Note that for all $v \in V_G$, we have that $\omega(G[\Gamma_G(v)]) \leq k-1$, and so by the minimality of $k$, $\chi(G[\Gamma_G(v)]) \leq f(k-1)$; thus $\chi_l(G) \leq f(k-1)$. Now, set $\alpha = f(k-1)$; then $\chi_l(G) \leq \alpha$ and $\chi(G) > 2^{n+1}(m+3)\alpha$. Fix $x_0 \in V_G$. Then \ref{alloy} implies that there exists an induced subgraph $H$ of $G$ and a vertex $x \in V_G$ such that $(H,x_0,x)$ is an $(m)_n$-alloy. But then $H$ contains an $(m)_n$-necklace$^*$ as an induced subgraph, contrary to the fact that $G \in {\rm Forb}^*((m)_n-necklace)$. 
\end{proof} 
\noindent 
The rest of the section is devoted to proving \ref{alloy}. The idea of the proof is to show that, given a connected graph $G$ whose chromatic number is sufficiently large relative to its local chromatic number, it is possible to recursively ``chisel'' an $(m)_n$-alloy out of the graph $G$. At each recursive step, the ``length'' of the alloy (i.e.\ the number $n$) increases, and the potential of the alloy decreases (but in a controlled fashion, so as to allow the next recursive step). We begin with a technical lemma, which we will use many times in this section. 
\begin{theorem} \label{path lemma} Let $G$ be a graph, let $x_0 \in V_G$, and let $S \subseteq V_G \smallsetminus \{x_0\}$ be such that $G[S]$ is connected and $x_0$ has a neighbor in $S$. Let $k$ be a non-negative integer, let $\alpha$ be a positive integer, and assume that $\chi_l(G) \leq \alpha$, and that $\chi(G[S]) > k\alpha$. Then there exist vertices $x_1,...,x_k \in S$ and a set $X \subseteq S$ such that: 
\begin{itemize} 
\item[a.] $x_0-x_1-...-x_k$ is an induced path in $G$; 
\item[b.] $G[X]$ is connected; 
\item[c.] $x_1,...,x_k \notin X$; 
\item[d.] $x_k$ has a neighbor in $X$; 
\item[e.] vertices $x_0,...,x_{k-1}$ are anti-complete to $X$; 
\item[f.] $\chi(G[X]) \geq \chi(G[S])-k\alpha$. 
\end{itemize} 
\end{theorem}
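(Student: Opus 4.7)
The plan is to proceed by induction on $k$. For the base case $k=0$, simply set $X = S$: the path is the single vertex $x_0$, which is trivially induced; $G[S]$ is connected by assumption; (c) and (e) are vacuous; $x_0$ (playing the role of $x_k$) has a neighbor in $S$ by hypothesis; and $\chi(G[X]) = \chi(G[S]) \geq \chi(G[S]) - 0\cdot\alpha$.

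For the inductive step, I would apply the hypothesis with $k-1$ in place of $k$ to obtain vertices $x_1,\ldots,x_{k-1}$ and a set $X' \subseteq S$ satisfying (a)--(f) at level $k-1$. From (f) at level $k-1$ together with the hypothesis $\chi(G[S]) > k\alpha$, we get $\chi(G[X']) \geq \chi(G[S]) - (k-1)\alpha > \alpha$. Now set $N = \Gamma_G(x_{k-1}) \cap X'$ and $Y = X' \setminus N$. Since $\chi_l(G) \leq \alpha$, we have $\chi(G[N]) \leq \alpha$, and therefore at least one connected component $X$ of $G[Y]$ has $\chi(G[X]) \geq \chi(G[X']) - \alpha \geq \chi(G[S]) - k\alpha$, which will give (f) and (b).

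The crucial observation is how to locate $x_k$. Because $G[X']$ is connected and $x_{k-1}$ has a neighbor in $X'$ (by (d) at level $k-1$), the set $N$ is nonempty and $X \subsetneq X'$, so some vertex $x_k \in X' \setminus X$ has a neighbor in $X$. Any such $x_k$ must lie in $N$, for otherwise $x_k \in Y$ and its edge into $X$ would force it into the same component as $X$, a contradiction. Thus $x_k \in \Gamma_G(x_{k-1}) \cap X'$, which delivers (d) and ensures we can extend the path, while $X$ is disjoint from $\Gamma_G(x_{k-1})$, which delivers anti-completeness to $x_{k-1}$.

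It remains to verify the remaining properties, which I expect to be routine. Property (a) follows since $x_k \in X'$ is non-adjacent to $x_0,\ldots,x_{k-2}$ by (e) at level $k-1$, distinct from them by (c) at level $k-1$, and adjacent to $x_{k-1}$ by choice; (c) and (e) at level $k$ inherit directly from their level-$(k-1)$ counterparts together with $X \subseteq Y = X' \setminus N$. The main conceptual obstacle, rather than any long calculation, is simply noticing that the cut $N$ separating $X$ from the rest of $X'$ simultaneously furnishes the next path vertex and enforces the required non-adjacency; once this is in hand, the induction closes cleanly.
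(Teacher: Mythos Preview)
Your proof is correct and follows essentially the same approach as the paper's: the paper frames the argument as choosing a maximal $i\in\{0,\dots,k\}$ for which the configuration exists and then deriving a contradiction if $i<k$, which is just your induction step unrolled. The core move is identical---remove $\Gamma_G(x_{k-1})$ from the current set, take a component of maximum chromatic number, and observe that connectivity forces the next path vertex to lie in that removed neighborhood---so the two arguments differ only in presentation.
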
 
\begin{proof} 
Let $i \in \{0,...,k\}$ be maximal such that there exist vertices $x_1,...,x_i \in S$ and a set $X \subseteq S$ such that: 
\begin{itemize} 
\item $x_0-x_1-...-x_i$ is an induced path in $G$; 
\item $G[X]$ is connected; 
\item $x_1,...,x_i \notin X$; 
\item $x_i$ has a neighbor in $X$; 
\item vertices $x_0,...,x_{i-1}$ are anti-complete to $X$; 
\item $\chi(G[X]) \geq \chi(G[S])-i\alpha$. 
\end{itemize} 
(The existence of such an index $i$ follows from the fact that $x_0$ is an induced path in $G$, $G[S]$ is connected, $x_0$ has a neighbor in $S$, and $\chi(G[S]) \geq \chi(G[S])-0 \cdot \alpha$.) 
\\
\\
We need to show that $i = k$. Suppose otherwise, that is, suppose that $i < k$. Then: 
\begin{displaymath} 
\begin{array}{rcl} 
\chi(G[X]) & \geq & \chi(G[S])-i\alpha 
\\
& > & k\alpha-i\alpha 
\\
& = & (k-i)\alpha 
\\
& \geq & \alpha, 
\end{array} 
\end{displaymath} 
\noindent 
and so $\chi(G[X]) > \alpha$. Since $\chi(G[\Gamma_G(x_i)]) \leq \alpha$ (because $\chi_l(G) \leq \alpha$), it follows that $x_i$ is not complete to $X$; let $X'$ be the vertex-set of a component of $G[X \smallsetminus \Gamma_G(x_i)]$ with maximum chromatic number. Then $\chi(G[X]) \leq \chi(G[\Gamma_G(x_i)])+\chi(G[X'])$, and so: 
\begin{displaymath} 
\begin{array}{rcl} 
\chi(G[X']) & \geq & \chi(G[X])-\chi(G[\Gamma_G(x_i)]) 
\\
& \geq & (\chi(G[S])-i\alpha)-\alpha 
\\
& = & \chi(G[S])-(i+1)\alpha 
\end{array} 
\end{displaymath} 
\noindent 
Fix a vertex $x_{i+1} \in X \cap \Gamma_G(x_i)$ such that $x_{i+1}$ has a neighbor in $X'$. But now the sequence $x_1,...,x_i,x_{i+1}$ and the set $X'$ contradict the maximality of $i$. It follows that $i = k$, which completes the argument. 
\end{proof} 
\noindent 
The following is an easy consequence of \ref{path lemma}, and it will serve as the base for our recursive construction of an $(m)_n$-alloy. 
\begin{theorem} \label{alloy base case} Let $G$ be a connected graph, let $x_0 \in V_G$, let $\beta$ be a non-negative integer, and let $m$ and $\alpha$ be positive integers. Assume that $\chi_l(G) \leq \alpha$, and that $\chi(G) > (m+1)\alpha+\beta$. Then there exists a vertex $x \in V_G \smallsetminus \{x_0\}$ and an induced subgraph $H$ of $G$ such that $(H,x_0,x)$ is an $(m)_0$-alloy of potential greater than $\beta$. 
\end{theorem}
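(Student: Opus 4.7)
The plan is to obtain this as an essentially direct consequence of the path lemma \ref{path lemma} applied to a well-chosen subset $S$ of $V_G \smallsetminus \{x_0\}$. Recall that an $(m)_0$-necklace is just a chordless $m$-edge path, so an $(m)_0$-necklace$^*$ is a chordless path with at least $m$ edges; hence an $(m)_0$-alloy is precisely a chordless path of length at least $m$ from $x_0$ to some vertex $x$, together with an attached connected set $X$ that meets only $x$. The path lemma produces exactly this sort of configuration, so all that remains is to feed it the right input.

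First I would locate a component $C$ of $G \smallsetminus x_0$ of large chromatic number. Since $\chi(G) \leq \chi(G \smallsetminus x_0) + 1$, some component $C$ of $G \smallsetminus x_0$ satisfies $\chi(G[V_C]) \geq \chi(G) - 1 \geq (m+1)\alpha + \beta$ (using the strict hypothesis $\chi(G) > (m+1)\alpha + \beta$). Since $\alpha \geq 1$, this is strictly greater than $m\alpha + \beta$. Connectedness of $G$ guarantees that $x_0$ has at least one neighbor in every component of $G \smallsetminus x_0$, and in particular in $C$.

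Now I apply \ref{path lemma} with $S = V_C$ and $k = m$: the hypotheses $\chi_l(G) \leq \alpha$ and $\chi(G[S]) > m\alpha$ are both satisfied, so the lemma yields vertices $x_1, \ldots, x_m \in S$ and a set $X \subseteq S$ such that $x_0 - x_1 - \cdots - x_m$ is an induced path, $G[X]$ is connected, $x_m$ has a neighbor in $X$, the vertices $x_0, \ldots, x_{m-1}$ are anti-complete to $X$, and
\[ \chi(G[X]) \;\geq\; \chi(G[S]) - m\alpha \;\geq\; (m+1)\alpha + \beta - m\alpha \;=\; \alpha + \beta \;>\; \beta. \]

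Finally, setting $x = x_m$, $N = \{x_0, x_1, \ldots, x_m\}$, and $H = G[N \cup X]$, the conclusions of the path lemma are precisely the four bullets in the definition of an alloy: $G[N]$ is an induced $m$-edge path (an $(m)_0$-necklace$^*$ with base $x_0$ and hook $x_m$), $G[X]$ is connected, $N \smallsetminus \{x_m\}$ is anti-complete to $X$, and $x_m$ has a neighbor in $X$. Thus $(H, x_0, x_m)$ is an $(m)_0$-alloy of potential $\chi(G[X]) > \beta$, as required. I do not anticipate any real obstacle here beyond matching up the bookkeeping: the work has been concentrated in \ref{path lemma}, and this statement serves primarily as the base case (for $n = 0$) of the recursive argument that will establish \ref{alloy}.
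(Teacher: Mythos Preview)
Your proposal is correct and follows essentially the same approach as the paper's own proof: both choose $S$ to be the vertex-set of a component of $G \smallsetminus x_0$ of maximum chromatic number, note that $\chi(G[S]) > m\alpha + \beta$, apply \ref{path lemma} with $k=m$, and then set $H = G[\{x_0,\dots,x_m\} \cup X]$ and $x = x_m$. The only differences are cosmetic (you carry the extra $\alpha$ through the final inequality rather than dropping it earlier).
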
 
\begin{proof} 
Let $S$ be the vertex-set of a component of $G \smallsetminus x_0$ of maximum chromatic number. Clearly then, $\chi(G) \leq \chi(G[S])+1$, and consequently, $\chi(G[S]) > m\alpha+\beta$. Since $G$ is connected, $x_0$ has a neighbor in $S$. By \ref{path lemma} then, there exist vertices $x_1,...,x_m \in S$ and a set $X \subseteq S$ such that: 
\begin{itemize} 
\item $x_0-x_1-...-x_m$ is an induced path in $G$; 
\item $G[X]$ is connected; 
\item $x_1,...,x_m \notin X$; 
\item $x_m$ has a neighbor in $X$; 
\item vertices $x_0,...,x_{m-1}$ are anti-complete to $X$; 
\item $\chi(G[X]) \geq \chi(G[S])-m\alpha$. 
\end{itemize} 
The fact that $\chi(G[X]) \geq \chi(G[S])-m\alpha$ and $\chi(G[S]) > m\alpha+\beta$ implies that $\chi(G[X]) > \beta$. Now set $H = G[\{x_0,...,x_{m_0}\} \cup X]$ and $x = x_m$. Then $(H,x_0,x)$ is an $(m)_0$-alloy of potential greater than $\beta$. 
\end{proof} 
\noindent 
Our goal now is to show that, given an $(m)_n$-alloy with large potential and small local chromatic number of the base graph, we can ``chisel'' out of this $(m)_n$-alloy an $(m)_{n+1}$-alloy of large potential. More formally, we wish to prove the following lemma. 
\begin{theorem} \label{alloy induction case} Let $n$ and $\beta$ be non-negative integers, and let $m$ and $\alpha$ be positive integers. Let $(G,x_0,x)$ be an $(m)_n$-alloy of potential greater than $2((m+3)\alpha+\beta)$, and let $(N,X)$ be the partition of the alloy $(G,x_0,x)$. Assume that $\chi_l(G) \leq \alpha$. Then there exist disjoint sets $N',X' \subseteq V_G$ such that $N \subseteq N'$ and $X' \subseteq X$, and a vertex $x' \in X$ such that $(G[N' \cup X'],x_0,x')$ is an $(m)_{n+1}$-alloy of potential greater than $\beta$ and with partition $(N',X')$. 
\end{theorem}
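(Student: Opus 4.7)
Plan: I aim to extract from the high-chromatic set $X$ a new ``bead'' — a subdivided triangle at $x$ together with a subdivided path of length $m$ — plus a connected $X' \subseteq X$ with $\chi(G[X']) > \beta$ that will become the $X$-part of the new alloy. Write $Y := \Gamma_G(x) \cap X$. Since $\chi_l(G) \le \alpha$ gives $\chi(G[Y]) \le \alpha$, I have $\chi(G[X \setminus Y]) \ge \chi(G[X]) - \alpha > (2m+5)\alpha + 2\beta$, and so some connected component $Z_1$ of $G[X \setminus Y]$ inherits this chromatic number; by connectivity of $G[X]$, some $y_1 \in Y$ has a neighbor in $Z_1$.

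The spine of the bead will come from applying \ref{path lemma} to $G$ with starting vertex $y_1$, subset $S := Z_1$, and length $k := m$: this yields an induced path $y_1 - z_1 - \cdots - z_m$ in $\{y_1\} \cup Z_1$ together with a connected $X^{(1)} \subseteq Z_1$ of chromatic number at least $\chi(G[Z_1]) - m\alpha > (m+5)\alpha + 2\beta$, satisfying the anti-completeness conclusions of the lemma. Because $x$ is adjacent to $y_1$ but non-adjacent to every vertex of $X \setminus Y$, prepending the edge $xy_1$ gives an induced path $x - y_1 - z_1 - \cdots - z_m$, which will serve as the spine of the bead: $y_1$ plays the role of $x_{n+1,0}$, each $z_i$ is a subdivision vertex along the new $m$-edge segment, and $z_m$ becomes the new hook $x'$.

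The main remaining step — and the main obstacle — is to construct the other side of the subdivided triangle at $x$, namely a second induced path from $x$ to $y_1$ of length at least $2$ whose interior lies in $X$ and avoids both the spine and $X^{(1)}$. In the principal case I pick a second neighbor $y_2 \in Y \setminus \{y_1\}$ of $x$ and exploit the chromatic richness of $Z_1 \setminus \{z_1, \ldots, z_m\}$ to exhibit a short induced $y_2$--$y_1$ path avoiding the spine. In the degenerate subcase where $|Y| = 1$ or $y_1$ is the only vertex of $Y$ adjacent to $Z_1$, I absorb $y_1$ into $N$: adjoining a pendant at the hook of an $(m)_n$-necklace$^*$ still yields an $(m)_n$-necklace$^*$ (with $y_1$ as the new hook and the old hook $x$ becoming an internal subdivision vertex of the terminal edge), so the modified triple with a maximum-chromatic component of $G[X \setminus \{y_1\}]$ adjacent to $y_1$ as its new $X$-part is again an $(m)_n$-alloy of potential at least $\chi(G[X]) - 1$; iterating this absorption until the principal case is reached, the factor of $2$ in $\chi(G[X]) > 2((m+3)\alpha + \beta)$ is precisely the chromatic slack needed to accommodate the iteration together with the path-lemma step, leaving $\chi(G[X']) > \beta$ where $X'$ is taken to be the connected remainder of $X^{(1)}$ after discarding the $O(\alpha)$-chromatic set of vertices used for the triangle side. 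The resulting triple $(G[N' \cup X'], x_0, x')$, with $N'$ equal to $N$ together with the vertices of the new bead, is then the desired $(m)_{n+1}$-alloy.
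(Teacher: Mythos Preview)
Your proposal has a genuine gap in the accounting for the degenerate-case iteration, and the principal case is underspecified in a way that hides real difficulties.

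\textbf{The iteration is unbounded.} When you absorb $y_1$ and recurse, you claim the new potential is at least $\chi(G[X])-1$. But the max-chromatic component of $G[X\smallsetminus\{y_1\}]$ may contain other vertices of $Y$, and then $x$ (no longer the hook) has neighbors in it, violating the alloy condition. To stay an alloy you must take $Z_1$ itself as the new $X$-part, and then the potential drops to $\chi(G[Z_1])\ge\chi(G[X])-\alpha$, not $\chi(G[X])-1$. More importantly, nothing bounds how many times you iterate: you could peel off single layers indefinitely, losing $\alpha$ (or even just $1$) each time, for a number of steps bounded only by $|X|$. The factor of $2$ in the hypothesis does not cover arbitrarily many such steps. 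The paper's device here is precisely to avoid this unbounded peeling: it takes a full BFS stratification $\{S_i\}_{i=0}^t$ from $x$ at once, and uses the parity argument (odd layers pairwise anti-complete, even layers pairwise anti-complete) to guarantee that some single layer $S_t$ has $\chi(H[S_t])\ge\chi(G[X])/2$. That is where the factor of $2$ actually comes from.

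\textbf{The principal case is not a construction.} Even granting $y_2\in Y\smallsetminus\{y_1\}$ adjacent to $Z_1$, you have not explained how to produce an induced $x$--$y_1$ path through $y_2$ that is simultaneously (i) internally disjoint from the spine $y_1,z_1,\dots,z_m$, (ii) anti-complete to $\{z_1,\dots,z_m\}$, and (iii) anti-complete to your eventual $X'$. ``Chromatic richness of $Z_1\smallsetminus\{z_1,\dots,z_m\}$'' does not by itself yield such a path with all these non-adjacencies; for instance $y_2$ may be adjacent to several $z_i$, or the only $y_2$--$y_1$ paths in $Z_1$ may run through $X^{(1)}$. The paper's proof handles exactly this difficulty with a careful case analysis in the last two BFS layers (its Lemma~\ref{alloy hard}), introducing auxiliary ``type one'' and ``type two'' paths and using the minimality built into the stratification to force the required non-adjacencies. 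Some argument of that kind is needed; the sketch as written does not supply one.
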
 
\noindent 
We now need some definitions. Let $n$ be a non-negative and $m$ a positive integer, and let $(G,x_0,x)$ be an $(m)_n$-alloy with partition $(N,X)$. Assume that the potential of $(G,x_0,x)$ is greater than $2\beta$ (where $\beta$ is some non-negative integer). For each $i \in \mathbb{N} \cup \{0\}$, let $S_i'$ be the set of all vertices in $\{x\} \cup X$ that are at distance $i$ from $x$ in $G[\{x\} \cup X]$; thus, $S_0' = \{x\}$. Let $t \in \mathbb{N}$ be such that $\chi(G[S_t'])$ is as large as possible. As the sets $S_1,S_3,S_5,...$ are pairwise anti-complete to each other, as are the sets $S_2,S_4,S_6,...$, it is easy to see that $\chi(G[X]) \leq 2\chi(G[S_t'])$, and consequently, $\chi(G[S_t']) > \beta$. Now, let $S_t$ be the vertex-set of a component of $G[S_t']$ with maximum chromatic number (thus, $\chi(G[S_t]) > \beta$), and for each $i \in \{0,1,...,t-1\}$, let $S_i$ be an inclusion-wise minimal subset of $S_i'$ such that every vertex in $S_{i+1}$ has a neighbor in $S_i$; clearly, $S_0 = \{x\}$. Let $H = G[N \cup \bigcup_{i=1}^t S_i]$. We then say that $(H,x_0,x)$ is a {\em reduction} of the $(m)_n$-alloy $(G,x_0,x)$, and that $\{S_i\}_{i=0}^t$ is the {\em stratification} of $(H,x_0,x)$. Clearly, $(H,x_0,x)$ is itself an $(m)_n$-alloy, and $(N,\bigcup_{i=1}^t S_i)$ is the associated partition. Further, as $\chi(G[S_t]) > \beta$ and $H$ is an induced subgraph of $G$, we know that $\chi(H[S_t]) > \beta$. Next, given vertices $a \in S_p$ and $b \in S_q$ for some $p,q \in \{0,...,t\}$, a path $P$ in $H$ between $a$ and $b$ is said to be {\em monotonic} provided that it has $|p-q|$ edges. This means that if $p = q$ then $a = b$, and if $p \neq q$ then all the internal vertices of the path $P$ lie in $\bigcup_{r=\min\{p,q\}+1}^{\max\{p,q\}-1} S_r$, with each set $S_r$ (with $\min\{p,q\}+1 \leq r \leq \max\{p,q\}-1$) containing exactly one vertex of the path. Clearly, every monotonic path is induced. We observe that for all $p \in \{0,...,t\}$ and $a \in S_p$, there exists a monotonic path between $x$ and $a$. 
\\
\\
The idea of the proof of \ref{alloy induction case} is as follows. First, we let $(H,x_0,x)$ be a reduction of the $(m)_n$-alloy $(G,x_0,x)$, and we let $\{S_i\}_{i=0}^t$ be the associated stratification. From now on, we work only with the graph $H$ (and not $G$). We find the needed vertex $x'$ in the set $S_t$, and the set $X'$ is chosen to be a suitable subset of the set $S_t$. Our proof splits into two cases. The first (and easier) case is when at least one of the sets $S_1,...,S_{t-2}$ is not stable (in this case, we necessarily have $t \geq 3$); the second (and harder) case is when the sets $S_1,...,S_{t-2}$ are all stable. We treat these two cases in two separate lemmas (the first case is treated in \ref{alloy easy}, and the second case in \ref{alloy hard}). 
\begin{theorem} \label{alloy easy} Let $n$ and $\beta$ be non-negative integers, and let $m$ and $\alpha$ be positive integers. Let $(G,x_0,x)$ be an $(m)_n$-alloy of potential greater than $2(m\alpha+\beta)$, and let $(N,X)$ be the partition of the alloy $(G,x_0,x)$. Assume that $\chi_l(G) \leq \alpha$. Let $(H,x_0,x)$ be a reduction of the $(m)_n$-alloy $(G,x_0,x)$, and let $\{S_i\}_{i=0}^t$ be the associated stratification. Assume that $t \geq 3$ and that at least one of the sets $S_1,...,S_{t-2}$ is not stable. Then there exist disjoint sets $N',X' \subseteq V_H$ such that $N \subseteq N'$ and $X' \subseteq S_t$, and a vertex $x' \in S_t$ such that $(H[N' \cup X'],x_0,x')$ is an $(m)_{n+1}$-alloy of potential greater than $\beta$ and with partition $(N',X')$. 
\end{theorem}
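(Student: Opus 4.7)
My plan is to build, inside the reduction $H$, the new bead as an induced cycle through $x$ using the given edge in $S_k$, then to extend it monotonically through the strata up to $S_t$, and finally to apply \ref{path lemma} within $S_t$ to produce the new segment of length at least $m$ together with a connected high-chromatic subset $X' \subseteq S_t$ serving as the new partition.

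Fix $k \in \{1, \ldots, t-2\}$ with adjacent $u, v \in S_k$. For the bead, I take monotonic paths $P_u$ from $x$ to $u$ and $P_v$ from $x$ to $v$ (each of length exactly $k$); together with the edge $uv$ they trace a closed walk through $x$, $u$, and $v$, and a shortest cycle in $H[\{x\} \cup X]$ that contains $x$ and the edge $uv$ is induced by minimality. Call this cycle $C$. All vertices of $C$ lie in $S_0 \cup \cdots \cup S_k$, and since $k \le t-2$, the strata-distance property (vertices in $S_i$ and $S_j$ are non-adjacent when $|i-j| \ge 2$) guarantees that $C$ is anti-complete to $S_t$. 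For the segment, I extend from $u$ by choosing $a_i \in S_i$ for $i = k+1, \ldots, t-1$ so that $u - a_{k+1} - \cdots - a_{t-1}$ is an induced path internally vertex-disjoint from $C$ with no chord to $C \setminus \{u\}$, with $a_{t-1}$ having a neighbor in $S_t$. Then I apply \ref{path lemma} with $G := H$, $x_0 := a_{t-1}$, $S := S_t$, $k := m$, and the given $\alpha$. The reduction construction combined with the hypothesis that the alloy's potential exceeds $2(m\alpha + \beta)$ yields $\chi(H[S_t]) > m\alpha + \beta > m\alpha$, so the hypothesis of \ref{path lemma} is satisfied; the lemma produces an induced path $a_{t-1} - y_1 - \cdots - y_m$ with $y_1, \ldots, y_m \in S_t$ and a connected set $X' \subseteq S_t$ with $\chi(H[X']) > \beta$, such that $y_m$ has a neighbor in $X'$ and $a_{t-1}, y_1, \ldots, y_{m-1}$ are anti-complete to $X'$. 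I set $x' := y_m$ and $N' := N \cup V(C) \cup \{a_{k+1}, \ldots, a_{t-1}, y_1, \ldots, y_m\}$.

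Verifying that $(H[N' \cup X'], x_0, x')$ is the desired $(m)_{n+1}$-alloy is then essentially bookkeeping: $H[N']$ is an $(m)_{n+1}$-necklace$^*$ with base $x_0$ and hook $x'$, obtained from the old necklace $H[N]$ by attaching at $x$ the bead $C$ (a subdivided triangle) and extending from $u$ by the new segment (a subdivided path of length $t + m - k - 1 \ge m+1$); connectedness of $H[X']$ and the existence of a neighbor of $x'$ in $X'$ come directly from \ref{path lemma}; and anti-completeness of $N' \setminus \{x'\}$ to $X'$ follows by combining the alloy hypothesis on $(G, x_0, x)$ (which gives $N$ anti-complete to $X \supseteq X'$), the strata-distance property (which gives $V(C) \cup \{a_{k+1}, \ldots, a_{t-2}\}$ anti-complete to $S_t$), and the anti-completeness conclusion of \ref{path lemma} (which handles $a_{t-1}, y_1, \ldots, y_{m-1}$). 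The \emph{main obstacle} lies in producing the chord-free monotonic extension $u - a_{k+1} - \cdots - a_{t-1}$: the strata-distance property rules out every chord between the extension and $C$ except possible edges between $a_{k+1} \in S_{k+1}$ and the $S_k$-vertices of $C$ (which are exactly $u$ and $v$), and so one must choose $a_{k+1} \in N_H(u) \cap S_{k+1} \setminus N_H(v)$; when no such vertex exists, one swaps the roles of $u$ and $v$ in the bead cycle, or enlarges $C$ to absorb the problematic vertex. This case analysis is the technical heart of the proof.
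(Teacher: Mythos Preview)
Your high-level strategy matches the paper's: build an induced cycle in the low strata to serve as the new bead, extend monotonically to $S_{t-1}$, then apply \ref{path lemma} inside $S_t$. However, two of your steps have genuine gaps.

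First, and most seriously, the claim that a shortest cycle in $H[\{x\}\cup X]$ through $x$ and the edge $uv$ is induced ``by minimality'' is false. A chord between the two arcs of such a cycle splits it into two smaller cycles, but one contains $x$ and not $uv$, and the other contains $uv$ and not $x$; neither contradicts minimality. Worse, a simple cycle through $x$ and $uv$ need not exist at all: if, say, $|S_1|=1$, then every monotonic path from $x$ to $u$ and every monotonic path from $x$ to $v$ passes through the unique vertex of $S_1$, so no simple cycle through $x$ can contain $uv$. The paper avoids both problems by (i) choosing the level $r$ \emph{minimal} with $S_r$ non-stable, so that $S_1,\dots,S_{r-1}$ are stable and same-level chords are impossible, and (ii) choosing $z\in S_p$ with $p$ \emph{maximal} such that monotonic paths $P_a,P_b$ from $z$ to the two adjacent vertices $a,b\in S_r$ exist; maximality of $p$ forces $P_a$ and $P_b$ to be internally disjoint and rules out cross-level chords between them. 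The bead is the induced cycle $H[V_{P_a}\cup V_{P_b}]$ through $z$ (not through $x$), and a separate monotonic path $Q$ joins $x$ to $z$.

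Second, your handling of the extension vertex $a_{k+1}$ is incomplete: if every $S_{k+1}$-neighbour of $u$ is also adjacent to $v$ and vice versa, swapping $u$ and $v$ gains nothing, and ``enlarging $C$ to absorb'' such a vertex produces a cycle with the chord $uv$, which is not the induced structure you need. The paper sidesteps this entirely by invoking the \emph{inclusion-wise minimality} of $S_r$ built into the stratification: since $S_r\smallsetminus\{a\}$ fails to dominate $S_{r+1}$, some $s_{r+1}\in S_{r+1}$ has $a$ as its only neighbour in $S_r$, hence is non-adjacent to $b$ and (by the strata-distance property) to every other vertex of the bead.
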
 
\begin{proof} 
First, as pointed out above, we know that $\chi(H[S_t]) > m\alpha+\beta$. Now, let $r \in \{1,...,t-2\}$ be minimal with the property that $S_r$ is not stable; fix adjacent $a,b \in S_r$. Let $p \in \{0,...,r-1\}$ be maximal with the property that there exists some $z \in S_p$ such that for each $d \in \{a,b\}$, there exists a monotonic path $P_d$ between $z$ and $d$ (such an index $p$ and a vertex $z$ exist because $x_0 \in S_0$ and there exist monotonic paths between $x_0$ and $a$ and between $x_0$ and $b$). Since $S_0,...,S_{r-1}$ are all stable, this means that $H[V_{P_a} \cup V_{P_b}]$ is a chordless cycle, and by construction, $(V_{P_a} \cup V_{P_b}) \cap S_p = \{z\}$ and $(V_{P_a} \cup V_{P_b}) \cap S_r = \{a,b\}$. Next, let $Q$ be a monotonic path between $x$ and $z$. By the minimality of $S_r$, there exists some $s_{r+1} \in S_{r+1}$ that is adjacent to $a$ and non-adjacent to $b$. Now, fix some $s_{t-1} \in S_{t-1}$ such that there exists a monotonic path $R$ between $s_{r+1}$ and $s_{t-1}$ (the existence of $s_{t-1}$ follows from the fact that for all $i \in \{0,...,t-1\}$ and $v \in S_i$, $v$ has a neighbor in $S_{i+1}$). Since $s_{t-1}$ has a neighbor in $S_t$, and since $\chi(H[S_t]) > m\alpha$, we can apply \ref{path lemma} to the vertex $s_{t-1}$ and the set $S_t$ to obtain vertices $u_1,...,u_m \in S_t$ and a set $X' \subseteq S_t \smallsetminus \{u_1,...,u_m\}$ such that the following hold: 
\begin{itemize} 
\item $s_{t-1}-u_1-...-u_m$ is an induced path in $G$; 
\item $u_m$ has a neighbor in $X'$; 
\item vertices $s_{t-1},u_1,...,u_{m-1}$ are anti-complete to $X'$; 
\item $H[X']$ is connected; 
\item $\chi(H[X']) \geq \chi(H[S_t])-m\alpha$. 
\end{itemize} 
Set $N' = N \cup V_{Q} \cup V_{P_a} \cup V_{P_b} \cup V_R \cup \{u_1,...,u_m\}$ and $x' = u_m$. Clearly then, $(H[N' \cup X'],x_0,x')$ is an $(m)_{n+1}$-alloy with partition $(N',X')$. Since $\chi(H[X']) \geq \chi(H[S_t])-m\alpha$ and $\chi(H[S_t]) > m\alpha+\beta$, we get that $\chi(H[X']) > \beta$. This completes the argument. 
\end{proof} 
\begin{theorem} \label{alloy hard} Let $n$ and $\beta$ be non-negative integers, and let $m$ and $\alpha$ be positive integers. Let $(G,x_0,x)$ be an $(m)_n$-alloy of potential greater than $2((m+3)\alpha+\beta)$, and let $(N,X)$ be the partition of the alloy $(G,x_0,x)$. Assume that $\chi_l(G) \leq \alpha$. Let $(H,x_0,x)$ be a reduction of the $(m)_n$-alloy $(G,x_0,x)$, and let $\{S_i\}_{i=0}^t$ be the associated stratification. Assume that the sets $S_1,...,S_{t-2}$ are all stable. Then there exist disjoint sets $N',X' \subseteq V_H$ such that $N \subseteq N'$ and $X' \subseteq S_t$, and a vertex $x' \in S_t$ such that $(H[N' \cup X'],x_0,x')$ is an $(m)_{n+1}$-alloy of potential greater than $\beta$ and with partition $(N',X')$. 
\end{theorem}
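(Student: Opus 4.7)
The plan is to work inside the reduction $(H, x_0, x)$ with its stratification $\{S_i\}_{i=0}^t$, where the hypothesis on potential gives $\chi(H[S_t]) > (m+3)\alpha + \beta$. A first observation is that $t \geq 2$: if $t = 1$, then $S_1 \subseteq \Gamma_H(x)$, which would force $\chi(H[S_1]) \leq \chi_l(H) \leq \alpha$, contradicting the lower bound on $\chi(H[S_t])$. The goal is to attach to the existing necklace$^*$ a gadget cycle (playing the role of the subdivided triangle $\{x_{n,m_n}, x_{n+1,0}, y_{n+1}\}$) together with a new length-$m$ segment ending at some $x' \in S_t$, and to leave behind a connected $X' \subseteq S_t$ with $\chi(H[X']) > \beta$. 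Because $S_1, \ldots, S_{t-2}$ are all stable, any cycle inside $H[\{x\} \cup S_1 \cup \cdots \cup S_t]$ must have its two branches reconverge no earlier than $S_{t-1}$; I would split the argument according to where this reconvergence occurs.

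Case A, when $S_{t-1}$ contains an edge $ab$: this essentially mimics \ref{alloy easy} with $r = t-1$, but with modifications because the new segment must now be carved entirely out of $S_t$. Pick $p \leq t-2$ maximal and $z \in S_p$ admitting monotonic paths $P_a, P_b$ from $z$ to $a, b$, and let $Q$ be a monotonic path from $x$ to $z$; stability of $S_0, \ldots, S_{t-2}$ then makes $H[V_{P_a} \cup V_{P_b}]$ an induced cycle with marked vertices $\{z, a, b\}$, where $b$ is destined to play the role of $y_{n+1}$. By minimality of $S_{t-1}$ there is $s_t \in S_t$ adjacent to $a$ but not $b$. To build the new segment while keeping $b$ anti-complete to it, I would first restrict to $S_t^* := S_t \setminus \Gamma_H(b)$ (costing at most $\alpha$ in chromatic number) and pass to a component $C$ of $H[S_t^*]$ in which $a$ has a neighbor and which still carries the bulk of the chromatic number (possibly after another $\alpha$-sized peeling). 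Then apply \ref{path lemma} to $a$ and $C$ with $k = m$ to obtain the induced path $a, x_1, \ldots, x_m$ and a connected $X' \subseteq C$ with $\chi(H[X']) > \beta$. Setting $N' = N \cup V_Q \cup V_{P_a} \cup V_{P_b} \cup \{x_1, \ldots, x_m\}$ and $x' = x_m$ gives the desired $(m)_{n+1}$-alloy.

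Case B, when $S_{t-1}$ is stable: then $S_t$ itself must contain an edge $uv$, since $\chi(H[S_t]) \geq 2$. Pick parents $p_u, p_v \in S_{t-1}$ of $u, v$. If $p_u = p_v$, the triangle $\{u, v, p_u\}$ is already a valid gadget. Otherwise $p_u p_v$ is a non-edge by stability of $S_{t-1}$, and after removing from $S_t$ any chords $p_u v$ or $p_v u$ by restricting to vertices non-adjacent to the appropriate parents, the walk $p_u - u - v - p_v$ becomes an induced path of length $3$; together with monotonic paths from a common ancestor $z$ down to $p_u$ and $p_v$ through the stable strata, it closes into an induced cycle, which serves as the gadget (any three vertices of the cycle can be designated as the marked vertices of the subdivided triangle). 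The new segment is then extracted from the surviving chromatically-large sub-component of $S_t$ by another application of \ref{path lemma} starting at a suitable neighbor of $u$. The $3\alpha$ slack over the hypothesis of \ref{alloy easy} is precisely what bankrolls the several chromatic peelings (one to kill each offending chord possibility, plus the final path-lemma cost).

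The hardest part, especially in Case B, is verifying that the final union of old necklace, ancestor paths, gadget cycle, new segment, and extension $X'$ is truly an induced $(m)_{n+1}$-necklace$^*$-alloy, free of stray chords. The dangerous chords are of the form ``$S_{t-1}$-vertex on the cycle adjacent to an $S_t$-vertex on the segment or in $X'$''; controlling these requires shrinking $S_t$ by removing $\Gamma_H(\cdot)$ for a bounded number of $S_{t-1}$-vertices (each shrinkage costing at most $\alpha$ in chromatic number) and then passing to an appropriate component. The stability of $S_1, \ldots, S_{t-2}$ rules out any other source of chords, and the extra $3\alpha$ of slack in the potential hypothesis, beyond the $m\alpha$ used in \ref{alloy easy}, is exactly the budget that pays for these peelings while still leaving the extension $X'$ with chromatic number strictly greater than $\beta$.
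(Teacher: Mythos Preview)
Your case split (whether $S_{t-1}$ contains an edge) is a different organization from the paper's, which never makes this distinction. The paper instead picks an \emph{arbitrary} $a\in S_{t-1}$, applies \ref{path lemma} with $k=2$ to get an induced path $a\!-\!u_0'\!-\!u_1'$ together with a connected $C\subseteq S_t$ (of chromatic number $>(m+1)\alpha+\beta$) to which $a$ and $u_0'$ are anti-complete but $u_1'$ is not, and only \emph{then} chooses $b\in S_{t-1}$ to be a parent of $u_1'$. After peeling $\Gamma_H(b)$ from $C$ to obtain $U$, the paper proves that some induced path inside $S_t\smallsetminus U$ from $A\cup B$ into $U$ has a controlled intersection with $A\cup B$ (``type one'' or ``type two''), and this path is what furnishes the gadget cycle and the bridge into $U$.

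The gap in your proposal is the sentence ``pass to a component $C$ of $H[S_t^*]$ in which $a$ has a neighbor and which still carries the bulk of the chromatic number.'' There is no reason these two requirements are compatible. Concretely, in Case~A you fix adjacent $a,b\in S_{t-1}$ \emph{first} and then peel $\Gamma_H(b)$ from $S_t$; the vertex $s_t$ (the private neighbour of $a$) may land in a tiny component of $S_t\smallsetminus\Gamma_H(b)$ while the large-chromatic component is only reachable from $s_t$ through $\Gamma_H(b)$. No bounded number of further $\alpha$-peelings repairs this, because peeling only removes vertices---it cannot manufacture an edge from $a$ into the surviving large component. Case~B has the identical defect: once you commit to $u,v$ and their parents, removing $\Gamma_H(p_u)$, $\Gamma_H(p_v)$ (and $\Gamma_H(v)$, which incidentally contains $u$ itself) gives no guarantee that $u$ still sees the high-chromatic piece.

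The paper avoids this precisely by reversing the order: the first application of \ref{path lemma} \emph{produces} a vertex $u_1'$ already adjacent to the high-chromatic set $C$, and $b$ is then chosen as a parent of $u_1'$, so $b$ is anchored near $C$ by construction. The subsequent ``type one/two'' argument is what replaces your unproved connectivity claim: it shows that, since $H[S_t]$ is connected and $u_0'\in A$, an induced path inside $S_t$ from $A\cup B$ into $U$ must exist with at most two vertices in $A\cup B$, and this is exactly enough structure to close the gadget cycle (via $a$, $b$, or both). Your sketch would need an analogue of this bridging argument to go through.
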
 
\begin{proof} 
First, since the potential of the alloy $(G,x_0,x)$ is greater than $2((m+3)\alpha+\beta)$, we know that $\chi(H[S_t]) > (m+3)\alpha+\beta$. Next, fix $a \in S_{t-1}$, and set $A = S_t \cap \Gamma_H(a)$. Note that $\chi(H[S_t]) > 2\alpha$, and so we can apply \ref{path lemma} to the vertex $a$ and the set $S_t$ in $H$ to obtain vertices $u_0',u_1' \in S_t$ and a non-empty set $C \subseteq S_t \smallsetminus \{u_0',u_1'\}$ such that $a-u_0'-u_1'$ is an induced path in $H$, $a$ and $u_0'$ are anti-complete to $C$ (note that this implies that $C \cap A = \emptyset$), $u_1'$ has a neighbor in $C$, $H[C]$ is connected, and 
\begin{displaymath} 
\begin{array}{rcl} 
\chi(H[C]) & \geq & \chi(H[S_t])-2\alpha 
\\
& > & ((m+3)\alpha+\beta)-2\alpha 
\\
& = & (m+1)\alpha+\beta. 
\end{array} 
\end{displaymath} 
Now, fix some $b \in S_{t-1}$ adjacent to $u_1'$; since $a$ is not adjacent to $u_1'$, this means that $a \neq b$. Set $B = S_t \cap \Gamma_H(b)$; clearly, $u_1' \in B$. Since $\chi(H[C]) > \alpha$ and $\chi(H[B]) \leq \alpha$, we know that $C \not\subseteq B$; let $U$ be the vertex-set of a component of $H[C \smallsetminus B]$ with maximum chromatic number. Then 
\begin{displaymath} 
\begin{array}{rcl} 
\chi(H[C]) & \leq & \chi(H[B])+\chi(H[U]) 
\\
& \leq & \alpha+\chi(H[U]), 
\end{array} 
\end{displaymath} 
and so $\chi(H[U]) > m\alpha+\beta$. Note that by construction, neither $A$ nor $B$ intersects $U$. 
\\
\\
Let us define a path of {\em type one} in $H$ to be an induced path $u_0-...-u_p$ (with $p \geq 1$) in $H[S_t \smallsetminus U]$ such that $u_0 \in A \cup B$, exactly one vertex among $u_1,...,u_p$ is in $A \cup B$, $u_p$ has a neighbor in $U$, and $u_0,...,u_{p-1}$ are all anti-complete to $U$. We define a path of {\em type two} in $H$ to be an induced path $u_0-...-u_p$ (with $p \geq 1$) in $H[S_t \smallsetminus U]$ such that $u_0 = u_0'$, no vertex among $u_1,...,u_p$ lies in $A \cup B$ (in particular, $u_1' \notin \{u_1,...,u_p\}$), $u_p$ has a neighbor in $U$, vertices $u_0,...,u_{p-1}$ are all anti-complete to $U$, and $u_1'$ is complete to $\{u_0,u_1\}$ and anti-complete to $\{u_2,...,u_p\} \cup U$. 
\\
\\
Our goal now is to show that $H$ contains a path of type one or two. Suppose that there is no path of type one in $H$. Since $H[S_t]$ is connected, and $u_0'$ is anti-complete to $U$, there exists an induced path $u_0-...-u_p$ (with $p \geq 1$) in $H[S_t \smallsetminus U]$ such that $u_0 = u_0'$, $u_p$ has a neighbor in $U$, and vertices $u_0,...,u_{p-1}$ are anti-complete to $U$. Note that $u_0 \in A$ (because $u_0 = u_0'$ and $u_0' \in A$). Clearly then, $u_1,...,u_p \notin A \cup B$, for otherwise, at least two vertices among $u_0,u_1,...,u_p$ would lie in $A \cup B$, and then $u_{p'}-u_{p'+1}-...-u_p$ would be a path of type one in $H$ for $p' \in \{0,...,p-1\}$ chosen maximal with the property that at least two vertices among $u_{p'},u_{p'+1},...,u_p$ lie in $A \cup B$. Since $u_0 = u_0'$ and $u_1,...,u_p \notin A \cup B$, we know that $u_1' \notin \{u_0,...,u_p\}$. Next, note that $u_1'$ is anti-complete to $U$, for otherwise, $u_0'-u_1'$ would be a path of type one in $H$. Further, $u_1'$ is anti-complete to $\{u_2,...,u_p\}$, for otherwise, we let $p' \in \{2,...,p\}$ be maximal with the property that $u_1'$ is adjacent to $u_{p'}$, and we observe that $u_0'-u_1'-u_{p'}-u_{p'+1}-...-u_p$ is a path of type one in $H$. Finally, $u_1'$ is adjacent to $u_1$, for otherwise, $u_1'-u_0-u_1-...-u_p$ would be a path of type one in $H$. Thus, $u_0-...-u_p$ is a path of type two in $H$. This proves that $H$ contains a path of type one or two. 
\\
\\
Let $u_0-...-u_p$ (with $p \geq 1$) be a path of type one or two in $H$. Recall that $\chi(H[U]) > m\alpha+\beta$. We now apply \ref{path lemma} to the vertex $u_p$ and the set $U$ in $H$ to obtain vertices $u_{p+1},...,u_{p+m} \in U$ and a set $X' \subseteq U \smallsetminus \{u_{p+1},...,u_{p+m}\}$ such that the following hold: 
\begin{itemize} 
\item $u_p-u_{p+1}-...-u_{p+m}$ is an induced path in $H$; 
\item $u_{p+m}$ has a neighbor in $X'$; 
\item vertices $u_p,...,u_{p+m-1}$ are anti-complete to $X'$; 
\item $H[X']$ is connected; 
\item $\chi(H[X']) \geq \chi(H[U])-m\alpha$; 
\end{itemize} 
note that the last condition, together with the fact that $\chi(H[U]) > m\alpha+\beta$, implies that $\chi(H[X']) > \beta$. Set $x' = u_{p+m}$. Our goal is to construct a set $N'$ with $N \subseteq N'$ such that $(H[N' \cup X'],x_0,x')$ is an $(m)_{n+1}$-alloy with partition $(N',X')$. Since $\chi(H[X']) > \beta$, the potential of any such alloy is greater than $\beta$, as desired. 
\\
\\
First, if $u_0-...-u_p$ is a path of type two in $H$, then we let $P$ be a monotonic path between $a$ and $x$, we set $N' = N \cup V_P \cup \{u_0,...,u_{p+m}\} \cup \{u_1'\}$, and we are done. From now on, we assume that $u_0-...-u_p$ is a path of type one in $H$. Fix $l \in \{1,...,p\}$ such that $u_l \in A \cup B$; then by the definition of a path of type one in $H$, we get that $u_0,u_l \in A \cup B$, and no other vertex on the path $u_0-...-u_p$ lies in $A \cup B$. If some vertex $d \in \{a,b\}$ is complete to $\{u_0,u_l\}$, then we let $P$ be a monotonic path between $x$ and $d$, we set $N' = N \cup V_P \cup \{u_0,...,u_{p+m}\}$, and we are done. From now on, we assume that neither $a$ nor $b$ is complete to $\{u_0,u_l\}$. Then one of $a$ and $b$ is adjacent to $u_0$ and non-adjacent to $u_l$, and the other is adjacent to $u_l$ and non-adjacent to $u_0$. Now, fix maximal $q \in \{0,...,t-2\}$ such that there exists a vertex $z \in S_q$ with the property that for each $d \in \{a,b\}$, there exists a monotonic path $P_d$ between $z$ and $d$. Since $S_0,...,S_{t-2}$ are all stable, we get that if $a$ and $b$ are adjacent then $H[V_{P_a} \cup V_{P_b}]$ is a chordless cycle, and if $a$ and $b$ are non-adjacent then $H[V_{P_a} \cup V_{P_b}]$ is an induced path between $a$ and $b$; in either case, we have that $(V_{P_a} \cup V_{P_b}) \cap S_{t-1} = \{a,b\}$ and $(V_{P_a} \cup V_{P_b}) \cap S_q = \{z\}$. Let $Q$ be a monotonic path between $z$ and $x$. Now, if $a$ and $b$ are adjacent, then we set $N' = N \cup V_Q \cup V_{P_a} \cup V_{P_b} \cup \{u_l,u_{l+1},...,u_{p+m}\}$; and if $a$ and $b$ are non-adjacent, then we set $N' = V_Q \cup V_{P_a} \cup V_{P_b} \cup \{u_0,...,u_{p+m}\}$. This completes the argument. 
\end{proof} 
\noindent 
We can now prove \ref{alloy induction case}, restated below. 
\begin{alloy induction case} Let $n$ and $\beta$ be non-negative integers, and let $m$ and $\alpha$ be positive integers. Let $(G,x_0,x)$ be an $(m)_n$-alloy of potential greater than $2((m+3)\alpha+\beta)$, and let $(N,X)$ be the partition of the alloy $(G,x_0,x)$. Assume that $\chi_l(G) \leq \alpha$. Then there exist disjoint sets $N',X' \subseteq V_G$ such that $N \subseteq N'$ and $X' \subseteq X$, and a vertex $x' \in X$ such that $(G[N' \cup X'],x_0,x')$ is an $(m)_{n+1}$-alloy of potential greater than $\beta$ and with partition $(N',X')$. 
\end{alloy induction case} 
\begin{proof} 
Let $(H,x_0,x)$ be a reduction of the $(m)_n$-alloy $(G,x_0,x)$, and let $\{S_i\}_{i=0}^t$ be the associated stratification. If $t \geq 3$ and at least one of the sets $S_1,...,S_{t-2}$ is not stable, then the result follows from \ref{alloy easy}. Otherwise, the result follows from \ref{alloy hard}. 
\end{proof} 
\noindent 
Finally, we use \ref{alloy base case} and \ref{alloy induction case} to prove \ref{alloy}, restated below. 
\begin{alloy} Let $G$ be a connected graph, and let $x_0 \in V_G$. Let $n$ and $\beta$ be non-negative integers, and let $m$ and $\alpha$ be positive integers. Assume that $\chi_l(G) \leq \alpha$ and $\chi(G) > 2^{n+1}((m+3)\alpha+\beta)$. Then there exists an induced subgraph $H$ of $G$ and a vertex $x \in V_G$ such that $(H,x_0,x)$ is an $(m)_n$-alloy of potential greater than $\beta$. 
\end{alloy} 
\begin{proof} 
For all $j \in \{0,...,n\}$, set $\beta_j = \beta+(\Sigma_{i=1}^{n-j} 2^i)((m+3)\alpha+\beta)$. Our goal is to prove inductively that for all $j \in \{0,...,n\}$, there exist disjoint sets $N_j,X_j \subseteq V_G$ and a vertex $x^j \in V_G$ such that $(G[N_j \cup X_j],x_0,x^j)$ is an $(m)_j$-alloy of potential greater than $\beta_j$. Since $\beta_n = \beta$, the result will follow. 
\\
\\
For the base case (when $j=0$), we observe that 
\begin{displaymath} 
\begin{array}{rcl} 
\chi(G) & > & 2^{n+1}((m+3)\alpha+\beta) 
\\
& > & (\Sigma_{i=0}^n 2^i)((m+3)\alpha+\beta) 
\\
& = & (m+3)\alpha+\beta+(\Sigma_{i=1}^n 2^i)((m+3)\alpha+\beta) 
\\
& = & (m+3)\alpha+\beta_0 
\\
& > & (m+1)\alpha+\beta_0, 
\end{array} 
\end{displaymath} 
and so \ref{alloy base case} implies that there exist sets $N_0,X_0 \subseteq V_G$ and a vertex $x^0 \in V_G$ such that $(G[N_j \cup X_j],x_0,x^j)$ is an $(m)_j$-alloy of potential greater than $\beta_0$. 
\\
\\
For the induction case, suppose that $j \in \{0,...,n-1\}$ and that there exist disjoint sets $N_j,X_j \subseteq V_G$ and a vertex $x^j \in V_G$ such that $(G[N_j \cup X_j],x_0,x^j)$ is an $(m)_j$-alloy of potential greater than $\beta_j$. Since 
\begin{displaymath} 
\begin{array}{rcl} 
\beta_j & = & \beta+(\Sigma_{i=1}^{n-j} 2^i)((m+3)\alpha+\beta) 
\\
& \geq & (\Sigma_{i=1}^{n-j} 2^i)((m+3)\alpha+\beta) 
\\
& = & 2((m+3)\alpha+\beta+(\Sigma_{i=1}^{n-(j+1)} 2^i)((m+3)\alpha+\beta)) 
\\
& = & 2((m+3)\alpha+\beta_{j+1}), 
\end{array} 
\end{displaymath} 
\ref{alloy induction case} implies that there exist sets $N_{j+1},X_{j+1} \subseteq V_G$ and a vertex $x^{j+1}$ such that $(G[N_{j+1} \cup X_{j+1}],x_0,x^{j+1})$ is an $(m)_{j+1}$-alloy of potential greater than $\beta_{j+1}$. This completes the induction. 
\end{proof}

\end{document}